\numberwithin{equation}{section}
\newcommand{\R}{\mathbb R}
\newcommand{\N}{\mathbb N}
\definecolor{darkgreen}{rgb}{0,.6,0}
\definecolor{MyDarkBlue}{rgb}{0,0.29,0.7}
\theoremstyle{plain}
\newtheorem{theorem}{Theorem}[section]
\newtheorem{coro}[theorem]{Corollary}
\newtheorem{lemma}[theorem]{Lemma}
\newtheorem{prop}[theorem]{Proposition}
\theoremstyle{definition}
\newtheorem{definition}[theorem]{Definition}
\newtheorem{example}[theorem]{Example}
\tikzset{
  on each segment/.style={
    decorate,
    decoration={
      show path construction,
      moveto code={},
      lineto code={
        \path [#1]
        (\tikzinputsegmentfirst) -- (\tikzinputsegmentlast);
      },
      curveto code={
        \path [#1] (\tikzinputsegmentfirst)
        .. controls
        (\tikzinputsegmentsupporta) and (\tikzinputsegmentsupportb)
        ..
        (\tikzinputsegmentlast);
      },
      closepath code={
        \path [#1]
        (\tikzinputsegmentfirst) -- (\tikzinputsegmentlast);
      },
    },
  },
  mid arrow/.style={postaction={decorate,decoration={
        markings,
        mark=at position .5 with {\arrow[#1]{>}}
      }}},
}
\newcolumntype{L}{>{$\displaystyle} l <{$}}
\begin{document}

\title{An integral that counts the zeros of a function}
\author[1]{Norbert Hungerb\"uhler}
\author[2]{Micha Wasem}
\affil[1]{Department of Mathematics, ETH Z\"urich, R\"amistrasse 101, 8092 Z\"urich, Switzerland}
\affil[2]{HTA Freiburg, HES-SO University of Applied Sciences and Arts Western Switzerland, P\'erolles 80, 1700 Freiburg, Switzerland}
\date{\today}
\maketitle
\begin{abstract}
\noindent Given a real function $f$ on an interval $[a,b]$ satisfying mild regularity
conditions, we determine the number of zeros of $f$
by evaluating a certain integral. The integrand depends on $f, f'$ and $f''$.
In particular, by approximating the integral with the trapezoidal rule on
a fine enough grid,
we can compute the number of zeros 
of  $f$  by evaluating finitely many values of $f,f'$ and  $f''$. A variant
of the integral
even allows to determine the number of the zeros broken down by their multiplicity.
\end{abstract}
\hspace{5ex}{\small{\it Key words\/}: number of zeros on an interval, multiplicity of zeros}

\hspace{5ex}{\small{\it 2010 Mathematics Subject 
Classification\/}: {\bf 30C15}
\normalsize
\section{Introduction}
Counting the zeros of a given function $f$ in a certain region
belongs to the basic tasks in analysis. If $f:\mathbb C\to\mathbb C$
is holomorphic, the Argument Principle and Rouch\'e's Theorem
are tools which allow to find the number of zeros of $f$,
counted with multiplicity, in a bounded domain of $\mathbb C$ with
sufficiently regular boundary (see, e.g.~\cite{zeros-analytic} for an overview 
of methods used for analytic functions). Descartes' Sign Rule
is a method of determining the maximum number of 
positive and negative real roots (counted with multiplicity) of a polynomial. 
The Fourier-Budan Theorem yields the maximum number of roots 
(counted with multiplicity) of a polynomial in an interval.
Sturm's Theorem, a refinement of Descartes' Sign Rule and the Fourier-Budan Theorem,
allows to count the exact number of distinct roots of a polynomial
on a real interval (see, e.g.,~\cite{obreschkoff}, \cite{henrici}, \cite{sturmfels}). 
The mentioned methods are restricted to holomorphic
functions and polynomials, respectively. On the other end of the regularity spectrum, 
for a merely continuous function $f$,
the Theorem of Bolzano yields the information that
at least one zero exists on an interval $[a,b]$ if
$f$ has opposite signs at its endpoints, though, it does not count the zeros.
Here, we want to construct a method which 
gives the number of zeros of a real function under only mild 
regularity assumptions. More precisely, we
want to express the number of zeros of a function
$f
$ by a certain integral (and boundary terms). The integrand depends on $f, f'$ and $f''$.
If $f
$ is sufficiently regular, the integral (and hence the number of
zeros of $f$) can be expressed by evaluating the integrand
on a sufficiently fine partition of $[a,b]$. Modifications of the integral
even allow to determine the number of the zeros broken down by their multiplicity.

To explain the basic idea, we consider the following elementary 
connection between the number of zeros of a periodic function
and the winding number of the related kinematic curve in the state space
with respect to the origin:
\begin{lemma}\label{lemmaidee}
Let $f : \mathbb R\to \mathbb R$ be a $2\pi$-periodic $C^2$ function
with only simple zeros, i.e.~points $x$ with $f(x)=0\neq f'(x)$.
Then, the number $n$ of zeros of $f$ in $[0,2\pi)$ equals 
twice the 
winding number of the curve 
$\gamma: [0,2\pi)\to \mathbb R^2, x\mapsto (f'(x),f(x))$ 
with respect to the origin. Hence
$$
n=\frac1{\pi}\int_0^{2\pi}\frac{f'(x)^2-f(x)f''(x)}{f(x)^2+f'(x)^2} \,\mathrm dx.
$$
\end{lemma}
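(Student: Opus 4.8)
The plan is to reduce the identity to the standard integral formula for the winding number and then to establish the combinatorial heart of the statement, namely that each loop of $\gamma$ around the origin corresponds to exactly two zeros of $f$. First I would check that $\gamma$ never meets the origin: if $f(x)=0$ then $f'(x)\neq0$ by the simple-zero hypothesis, so $(f'(x),f(x))\neq(0,0)$ for every $x$, and $\gamma$ is a closed $C^1$ curve in $\R^2\setminus\{0\}$. For such a curve the winding number about the origin is
$$
w=\frac1{2\pi}\int_0^{2\pi}\frac{x(t)y'(t)-y(t)x'(t)}{x(t)^2+y(t)^2}\,\mathrm dt,
$$
and substituting $x=f'$, $y=f$ (so that $x'=f''$, $y'=f'$) turns the numerator into $f'(x)^2-f(x)f''(x)$ and the denominator into $f(x)^2+f'(x)^2$. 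Since the prefactor $\tfrac1\pi$ in the statement equals $2\cdot\tfrac1{2\pi}$, the claimed integral is simply $2w$, and the entire difficulty reduces to the identity $n=2w$.

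To prove $n=2w$ I would work with a continuous lift of the argument, i.e.\ a $C^1$ function $\theta$ with $\gamma(x)=r(x)(\cos\theta(x),\sin\theta(x))$ and $r(x)>0$, so that $\theta(2\pi)-\theta(0)=2\pi w$ and, by $2\pi$-periodicity of $\gamma$, $\theta(x+2\pi)=\theta(x)+2\pi w$. The zeros of $f$ are precisely the points where $\theta\equiv0\pmod\pi$, i.e.\ where $\gamma$ meets the horizontal axis. The key computation is the value of
$$
\theta'=\frac{xy'-yx'}{x^2+y^2}=\frac{(f')^2-f f''}{f^2+(f')^2}
$$
at a zero $x_0$: there $f(x_0)=0$ and $f'(x_0)\neq0$, whence $\theta'(x_0)=f'(x_0)^2/f'(x_0)^2=1>0$. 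Thus $\theta$ passes through every multiple of $\pi$ transversally and strictly increasingly; in particular it can never cross such a level downward, and no crossing is tangential.

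The remaining, and most delicate, step is the bookkeeping. Because $\theta$ crosses each level $k\pi$ only upward, it meets each such level at most once on a period (a second hit would force an intervening downward crossing), and the zeros of $f$ in $[0,2\pi)$ are in bijection with the multiples of $\pi$ that $\theta$ attains as it climbs from $\theta(0)$ to $\theta(0)+2\pi w$. An increase of $2\pi w$ sweeps across exactly $2w$ multiples of $\pi$, which geometrically reflects that each full loop of $\gamma$ crosses the horizontal axis twice (once on each half-axis, corresponding to a zero with $f'>0$ and one with $f'<0$). I expect the main obstacle to be making this counting fully rigorous: in particular handling the half-open interval $[0,2\pi)$ together with the periodicity $\theta(x+2\pi)=\theta(x)+2\pi w$ so that an endpoint zero is counted exactly once, and confirming that $w\ge0$, indeed $w\ge1$ as soon as a zero exists, since $\theta$ then has a net upward passage through a multiple of $\pi$. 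Combining $n=2w$ with the winding-number integral above yields the stated formula.
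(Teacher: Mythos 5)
Your proof is correct, but it takes a genuinely different route from the paper. You prove the lemma directly: after reducing the stated integral to the standard winding-number formula for $\gamma=(f',f)$, you lift the argument to a $C^1$ function $\theta$ and exploit the key computation that $\theta'=\frac{(f')^2-ff''}{f^2+(f')^2}$ equals $1$ at every point where $f=0$, so $\theta$ crosses each level in $\pi\mathbb Z$ transversally and only upward; hence each level is met at most once per period (a second hit would force an intervening downward crossing, which your transversality observation forbids), and the $2w$ levels swept during the increase $\theta(2\pi)=\theta(0)+2\pi w$ are each met exactly once, giving $n=2w$. The paper never argues this way: it proves the far more general Theorem~\ref{main} by recognizing the integrand (for the Cauchy density $h$) as $-\frac1\pi\frac{\mathrm d}{\mathrm dx}\arctan\bigl(\frac{f'}{f}\bigr)$ and integrating between consecutive zeros, where admissibility forces $f'/f\to\mp\infty$, so each subinterval contributes exactly $1$; Lemma~\ref{lemmaidee} then falls out as the periodic special case in which the boundary terms cancel, and the winding-number interpretation is only motivated heuristically by the figure. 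Your approach buys a self-contained, geometrically transparent proof of the assertion $n=2w$ itself (which the paper states but only obtains indirectly through the integral identity), while the paper's antiderivative argument buys generality: non-periodic functions, zeros at the boundary, merely admissible (non-simple, even non-$C^1$) zeros, and arbitrary kernels $h$. The only places where you still owe detail are the case distinctions you explicitly flag --- the endpoint zero $f(0)=0$, where $\theta(0)\in\pi\mathbb Z$ and one must count the level $\theta(0)$ but not $\theta(2\pi)$, and the a priori inequality $w\geqslant 0$ --- but both complete exactly along the lines you indicate, since a level attained outside the range swept from $\theta(0)$ to $\theta(0)+2\pi w$ would again force a forbidden downward crossing.
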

Figure~\ref{fig:winding} illustrates a heuristic proof without words: Each colored arc
between two zeros of $f$ adds 
$\frac12$ 
to the winding number of $\gamma$.  In the sequel,
we will rigorously prove much more general versions and variants
of this result. We will develop integrals that count the number of 
zeros with and without multiplicity, and we will even be able to determine
the number of zeros of a given multiplicity. As a byproduct, a coherent definition
of a fractional multiplicity of zeros will be possible. 
To start with, it is necessary to analyze the nature of zeros of a function. 
\begin{figure}[h]\label{fig:winding}
\begin{center}
\begin{tikzpicture}[scale=.58,domain=0:6.28319]
\draw[->] (-.2,0) -- (7,0)  node[anchor=west] {\small$x$};
\draw (6.28319,0) node[anchor = south] {\small $2\pi$};
\draw[->] (0,-3.6) -- (0,2.8)  node[anchor=east] {\small$f(x)$};
\draw (6.28319,-.1)--(6.28319,.1);
\draw [dotted](6.28319,0 )--(6.28319,-1.8);
\draw[thick,smooth,color=blue,domain=0:0.468218] plot (\x,{.2*(8*sin(\x r)   -6*cos(3*\x r) + .4*sin(4*\x r) - 3)});
\draw[thick,smooth,color=red,domain=0.468218:1.89583] plot (\x,{.2*(8*sin(\x r)   -6*cos(3*\x r) + .4*sin(4*\x r) - 3)});
\draw[thick,smooth,color=darkgreen,domain=1.89583:2.55344] plot (\x,{.2*(8*sin(\x r)   -6*cos(3*\x r) + .4*sin(4*\x r) - 3)});
\draw[thick,smooth,color=cyan,domain=2.55344:3.37821] plot (\x,{.2*(8*sin(\x r)   -6*cos(3*\x r) + .4*sin(4*\x r) - 3)});
\draw[thick,smooth,color=blue,domain=3.37821:6.28319] plot (\x,{.2*(8*sin(\x r)   -6*cos(3*\x r) + .4*sin(4*\x r) - 3)});
\end{tikzpicture}\qquad
\begin{tikzpicture}[scale=.58,domain=0:6.28319]
\draw[->] (-3,0) -- (3,0)  node[anchor=west] {\small$f'(x)$};
\draw[->] (0,-3.6) -- (0,2.8)  node[anchor=east] {\small$f(x)$};
\draw (1.85,1.65) node[anchor=center] {\small$\gamma$};
\draw[thick,smooth,color=blue,domain=0:0.468218] plot ({.1*(8*cos(\x r)+1.6*cos(4*\x r)+18*sin(3*\x r))},{.2*(8*sin(\x r)-6*cos(3*\x r)+.4*sin(4*\x r)-3)});
\draw[thick,smooth,color=red,domain=0.468218:1.89583] plot ({.1*(8*cos(\x r)+1.6*cos(4*\x r)+18*sin(3*\x r))},{.2*(8*sin(\x r)-6*cos(3*\x r)+.4*sin(4*\x r)-3)});
\draw[-{>[scale=1.3]},color=red] ( 1.12309,1.70281)-- ( 0.856252,1.80186);
\draw[thick,smooth,color=darkgreen,domain=1.89583:2.55344] plot ({.1*(8*cos(\x r)+1.6*cos(4*\x r)+18*sin(3*\x r))},{.2*(8*sin(\x r)-6*cos(3*\x r)+.4*sin(4*\x r)-3)});

\draw[thick,smooth,color=cyan,domain=2.55344:3.37821] plot ({.1*(8*cos(\x r)+1.6*cos(4*\x r)+18*sin(3*\x r))},{.2*(8*sin(\x r)-6*cos(3*\x r)+.4*sin(4*\x r)-3)});
\draw[thick,smooth,color=blue,domain=3.37821:6.28319] plot ({.1*(8*cos(\x r)+1.6*cos(4*\x r)+18*sin(3*\x r))},{.2*(8*sin(\x r)-6*cos(3*\x r)+.4*sin(4*\x r)-3)});
\end{tikzpicture}
\caption{Number of zeros of $f$ vs.~winding number of $(f',f)$.}
\end{center}
\end{figure}
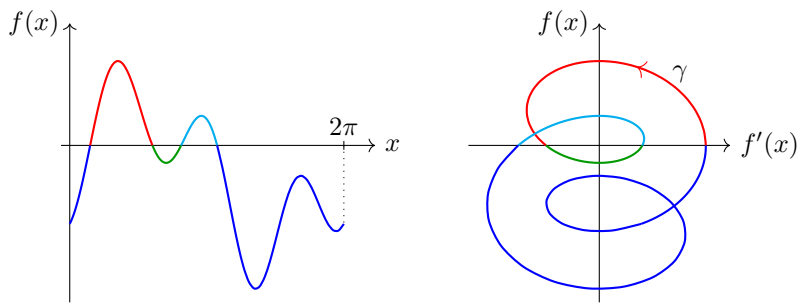
\section{Zeros of Functions}\label{sec-zeros}
A function $f:(a,b)\to\mathbb R$  may, in general, show a quite pathological behavior
in the neighborhood of one of its zeros (see, e.g., Examples~\ref{example-zero}.\ref{ex3} and \ref{ex-counter} below). 
To exclude such exotic cases but still be sufficiently general to
cover most of the relevant cases, we use the following definition.
\begin{definition}\label{def1}
A zero $x_0\in(a,b)$ of a
function $f\in C^0(a,b)\cap C^1((a,b)\setminus\{x_0\})$ will be called {\em admissible} provided
\begin{equation}\label{def-admissible}
\lim_{x\nearrow x_0} \frac{f'(x)}{f(x)}=-\infty \text{ and }\lim_{x\searrow x_0}\frac{f'(x)}{f(x)}=\infty.
\end{equation}
If $f$ extends continuously to $a$ (or $b$) and $f(a)=0$ (or $f(b)=0$), we will say that $f$ has an {\em admissible zero\/} in $a$ (or $b$) if
$$
\lim_{x\searrow a}\frac{f'(x)}{f(x)}=\infty~\left(\text{or }\lim_{x\nearrow b}\frac{f'(x)}{f(x)}=-\infty\right).$$
\end{definition}
{\em Remarks.} 
\begin{enumerate}
\item 
An admissible zero is necessarily an isolated zero. In fact, if the zero $x_0$ 
is an accumulation point of zeros of $f$ then, by Rolle's Theorem, it is also an accumulation
point of zeros of $f'$ and the limits in Definition~\ref{def1} cannot be plus or minus infinity.
\item The condition on the limits given in \eqref{def-admissible} is in fact equivalent to
\begin{equation}\label{def-admissible-equivalent}
\lim_{x\to x_0}\left|\frac{\mathrm d}{\mathrm dx}\ln|f(x)|\right|=\infty.
\end{equation}
Indeed, if \eqref{def-admissible-equivalent} holds true, it follows that $x_0$ is an isolated zero of $f$, hence $f$ does not change its sign on $(x_0,x_0+\varepsilon)$ and on $(x_0-\varepsilon,x_0)$ for $\varepsilon>0$ small enough. Moreover $0<|f(x)|<|f'(x)|$ on a punctured neighborhood of $x_0$. Hence, $f'$ cannot change sign and the claim follows by distinction of cases.
The condition~(\ref{def-admissible-equivalent}) is slightly more compact than~(\ref{def-admissible}), however,
(\ref{def-admissible}) is easier to handle in the calculations below. 

\item A simple zero $x_0\in(a,b)$ of $f\in C^1(a,b)$, i.e.\ $f(x_0)=0$ and $f'(x_0)\ne 0$ is admissible.
It suffices to consider $x_0=0$:
$$
\lim_{x\searrow 0} \frac{f'(x)}{f(x)}=\lim_{x\searrow 0}\frac{f'(0)+o(1)}{f(0)+xf'(0)+o(x)}=\lim_{x\searrow 0}\frac{1}{x}\cdot \frac{f'(0)+o(1)}{f'(0)+o(1)}=\infty.$$
The limit $x\nearrow 0$ is analogous.
\item\label{rem2} If $f(x_0)=f'(x_0)=0$ and $f'$ is monotone on $(x_0,x_0+\epsilon)$ and 
on  $(x_0-\epsilon,x_0)$ for some $\epsilon>0$, then $x_0$ is an admissible zero:
Indeed, for $x_0<x<x_0+\epsilon$ and $f'$ non-decreasing (if $f'$ is non-increasing consider $-f$) on $(x_0,x_0+\epsilon)$,
we have $f(x)=\int_{x_0}^x f'(t)\,\mathrm dt\leqslant(x-x_0)f'(x)$ and thus $\frac{f'(x)}{f(x)}\geqslant \frac{1}{x-x_0}\to \infty$ for
$x\searrow x_0$. The argument for the limit $x\nearrow x_0$ is analogous.
\item If $f\in C^k(a,b)$ and $x_0\in(a,b)$ is a zero of
multiplicity $k>1$, i.e.\ $f^{(\ell)}(x_0)=0$ for all $\ell=0,\ldots, k-1$
and $f^{(k)}(x_0)\ne 0$, then $x_0$ is admissible. 
This follows easily by an iterated application of L'H\^opital's rule.
Hence the zeros of real-analytic functions and a fortiori
zeros of polynomials are admissible.
\item\label{rem5} If $f(x)=|x-x_0|^\alpha g(x)$ for a $C^1$-function $g$ with $g(x_0)\neq 0$ and
$0< \alpha\in\mathbb R$, then $x_0$ is an admissible zero of $f$.
\item Every $f\in C^1([a,b])$ can be extended to $\tilde f\in C^1(I)$, where $I\supset[a,b]$ is an open interval and the limits
\begin{equation}\label{limits}\lim_{x\nearrow a}\frac{f'(x)}{f(x)}\text{ and }\lim_{x\searrow b}\frac{f'(x)}{f(x)}\end{equation}
can be defined via $\tilde f$, provided $f(a),f(b)\ne 0$. 
If $f$ has an admissible zero in $a$ (or $b$), $f$ can be extended antisymmetrically
with respect to $a$ (or $b$) to an extension $\tilde f$ for which $a$ (or $b$) is an admissible zero. We will henceforth use this particular extension when computing limits like in \eqref{limits}.

\end{enumerate}


\begin{example}\label{example-zero}
\begin{enumerate}
\item The function $f_1\in C^0(\R)\cap C^\infty(\R\setminus\{0\})$, $x\mapsto \sqrt{|x|}$ has an admissible zero in $x=0$
 (see Remark~\ref{rem5} above).
\item \label{expzero}The $C^\infty$-function
$$
f_2(x) := \begin{cases}\exp\left(-\frac{1}{x^2}\right),&x\ne 0 \\ \hfill 0,& x=0,\end{cases}
$$
has an admissible zero of infinite multiplicity at $x=0$ (see Remark~\ref{rem2} above).

\item\label{ex3} An example of an isolated zero which is {\em not\/} admissible is given by the $C^\infty$-function
$$
f_3(x):=f_2(x)\Bigl(\sin\Bigl(\frac{1}{x^3}\Bigr)+2\Bigr),
$$
which vanishes (together with all derivatives) in $0$ but the 
corresponding limits~(\ref{def-admissible}) do not exist.
\end{enumerate}
\end{example}
%

\begin{definition}\label{admissiblefunction} A function $f:[a,b]\to \R$ belongs to $\mathcal A^k([a,b])$, $k\in\N$, if the following holds:
\begin{enumerate}
\item $f\in C^0([a,b])$.
\item $f$ has only admissible (and therefore finitely many) zeros $x_1<\ldots<x_n$ and $f|_{(x_i,x_{i+1})}$ ($i=1,\ldots,n-1$), $f|_{(a,x_1)}$ and $f|_{(x_n,b)}$ are of class $C^{k+1}$.
\item There exists a partition $a=y_1<y_2<\ldots <y_m = b$ such that $f|_{(y_i,y_{i+1})}$ is of class $C^{k+2}$ for all $i=1,\ldots,m-1$.
\end{enumerate}
If $f\in\mathcal A^0([a,b])$, $f$ will be called \emph{admissible}. \end{definition}
{\em Remarks.} 
\begin{enumerate}
\item Observe that $\mathcal A^{k+1}([a,b])\subset\mathcal A^{k}([a,b])$ for all $k\in\N$ by construction.
\item Every analytic function is in $\mathcal A^\infty([a,b])$.
\item $f:[-1,1]\to\R$, $x\mapsto \sqrt{|x|}$ is in $\mathcal A^\infty([a,b])$.
\item If $f$ is admissible, then $x\mapsto(f'(x),f(x))$ is not necessarily a continuous curve.
\end{enumerate}

As a building block of the intended results we need the following: For $\sigma\in[-\infty,\infty]$, let
\begin{equation}\label{eq-H}
\mathrm H(x)=\int_{\sigma}^x h(t)\,\mathrm dt,
\end{equation}
where $h:\mathbb R\to\mathbb R$ is any piecewise continuous function 
such that the improper integral $\int_{-\infty}^{\infty}h(x)\,\mathrm dx = 1$. Then we have the following theorem (recall \eqref{limits} in order to make sense of the limits that appear).
\begin{theorem}\label{main}Let $f\in\mathcal A^0([a,b])$. The number of zeros $n(f)$ of $f$ in $[a,b]$ is given by
$$
n(f)=\int_a^{b}h\left(\frac{f'(x)}{f(x)}\right)\frac{f'(x)^2-f(x)f''(x)}{f(x)^2}\,\mathrm dx+\lim_{x\searrow b}\mathrm H\left(\frac{f'(x)}{f(x)}\right)-\lim_{x\nearrow a}\mathrm H\left(\frac{f'(x)}{f(x)}\right)
$$
and the number of zeros $\mathring n(f)$ of $f$ in $(a,b)$ by
$$
\mathring n(f) = \int_a^{b}h\left(\frac{f'(x)}{f(x)}\right)\frac{f'(x)^2-f(x)f''(x)}{f(x)^2}\,\mathrm dx+\lim_{x\nearrow b}\mathrm H\left(\frac{f'(x)}{f(x)}\right)-\lim_{x\searrow a}\mathrm H\left(\frac{f'(x)}{f(x)}\right).
$$
\end{theorem}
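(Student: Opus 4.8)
The plan is to recognize the integrand as (minus) a total derivative and then to integrate piecewise between the zeros, letting the admissibility hypothesis control the behavior at the singularities. First I would abbreviate $u:=f'/f$, which is defined and continuous on $[a,b]$ away from the zeros $x_1<\ldots<x_n$. A direct computation gives $u'=(f''f-f'^2)/f^2=-(f'^2-ff'')/f^2$, and since $\mathrm H'=h$ wherever $h$ is continuous, the chain rule yields
$$
\frac{\mathrm d}{\mathrm dx}\,\mathrm H(u(x)) = h(u(x))\,u'(x) = -\,h\Bigl(\frac{f'(x)}{f(x)}\Bigr)\,\frac{f'(x)^2-f(x)f''(x)}{f(x)^2}.
$$
Thus the integrand equals $-\frac{\mathrm d}{\mathrm dx}\mathrm H(u(x))$. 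Some care is needed because $h$ is only piecewise continuous and $f$ only $C^2$ on the pieces $(y_i,y_{i+1})$, but $\mathrm H$ is absolutely continuous and $u$ is piecewise $C^1$, so the identity holds almost everywhere and $\mathrm H\circ u$ is absolutely continuous on each interval free of zeros, which is all the fundamental theorem of calculus requires.

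Next I would split $\int_a^b$ into the integrals over $(a,x_1),(x_1,x_2),\ldots,(x_n,b)$ (refining by the points $y_i$ where necessary) and apply the fundamental theorem on each piece, so that the integral over a subinterval equals $\mathrm H(u)$ at its \emph{left} endpoint minus $\mathrm H(u)$ at its right endpoint. The decisive input is admissibility: as $x\searrow x_i$ one has $u(x)\to+\infty$ and as $x\nearrow x_i$ one has $u(x)\to-\infty$. Since $\int_{-\infty}^{\infty}h=1$ converges, the limits $\mathrm H(+\infty)=\int_\sigma^{\infty}h$ and $\mathrm H(-\infty)=\int_\sigma^{-\infty}h$ are finite, so each improper integral converges and $\mathrm H(+\infty)-\mathrm H(-\infty)=1$. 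Hence every subinterval bounded by two zeros contributes exactly $1$; in the case $f(a),f(b)\ne0$ the outer pieces $(a,x_1)$ and $(x_n,b)$ contribute $\mathrm H(u(a))-\mathrm H(-\infty)$ and $\mathrm H(+\infty)-\mathrm H(u(b))$, respectively.

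Summing these telescoping contributions gives $\int_a^b(\cdots)\,\mathrm dx = n+\mathrm H(u(a))-\mathrm H(u(b))$ when $f(a),f(b)\ne0$, and adding the prescribed boundary terms $\lim_{x\searrow b}\mathrm H(u)-\lim_{x\nearrow a}\mathrm H(u)$ (which here equal $\mathrm H(u(b))$ and $\mathrm H(u(a))$ by continuity) cancels the finite endpoint values and leaves exactly $n$. To cover the cases where $a$ or $b$ is itself a zero I would invoke the antisymmetric extension fixed in the final remark of Section~\ref{sec-zeros}: it renders such an endpoint admissible from the outside, so that $\lim_{x\nearrow a}\mathrm H(u)=\mathrm H(-\infty)$ (resp.\ $\lim_{x\searrow b}\mathrm H(u)=\mathrm H(+\infty)$), whereas the one-sided limit taken from inside $[a,b]$ equals $\mathrm H(+\infty)$ (resp.\ $\mathrm H(-\infty)$). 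The difference $\mathrm H(+\infty)-\mathrm H(-\infty)=1$ then accounts for exactly one unit, which is precisely whether a boundary zero is counted, and this is what distinguishes the formula for $n(f)$ on the closed interval from the one for $\mathring n(f)$ on the open interval.

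The main obstacle I anticipate is the rigorous treatment of the singularities at the zeros: one must justify the convergence of the improper integrals over the subintervals and the passage to the limit in the fundamental theorem of calculus, both of which hinge entirely on the admissibility hypothesis guaranteeing $u\to\pm\infty$ with the correct sign. The accompanying bookkeeping, namely a four-fold case distinction according to whether $a$ and $b$ are zeros, unified through the antisymmetric extension convention, is routine once this analytic point is settled.
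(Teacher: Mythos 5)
Your proposal is correct and follows essentially the same route as the paper: it identifies the integrand as $-\frac{\mathrm d}{\mathrm dx}\mathrm H\bigl(f'/f\bigr)$, splits the integral at the zeros (and at the points where $f''$ is undefined), uses admissibility to get the one-sided limits $\mathrm H(\pm\infty)$ so that each interval between consecutive zeros contributes exactly $1$, and handles boundary zeros via the antisymmetric extension, whose jump $\mathrm H(+\infty)-\mathrm H(-\infty)=1$ per boundary zero is precisely how the paper distinguishes $n(f)$ from $\mathring n(f)$. All signs and limit values check out against the paper's computation \eqref{integralcomputation}, and your remark that the degenerate cases $n(f)\in\{0,1\}$ and the mixed endpoint cases are routine matches the paper's own treatment.
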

\begin{proof}
Consider first the case, where $f(a),f(b)\ne 0$. Then the zeros of $f$ are given by $a<x_1<x_2<\ldots < x_{n(f)} <b$. The integrand of
$$
\int_a^{b}h\left(\frac{f'(x)}{f(x)}\right)\frac{f'(x)^2-f(x)f''(x)}{f(x)^2}\,\mathrm dx =: \int_a^{b}\mathrm I(x)\,\mathrm dx$$
is a priori undefined whenever $f$ vanishes or whenever $f''$ is undefined. We decompose the integral and compute the resulting improper integrals using unilateral limits. Since $f$ is admissible, we have
$$
\int_{x_j}^{x_{j+1}}\mathrm I(x)\,\mathrm dx = \lim_{x\searrow x_j}\mathrm H_x-\lim_{x\nearrow x_{j+1}}\mathrm H_x=1
$$
for all $j=1,\ldots, n(f)-1$, where $\mathrm H_x:=\mathrm H(f'(x)/f(x))$. Integrating over a neighborhood of a point $y$ where $f''$ is undefined does not introduce further boundary terms since $
\lim_{x\searrow y} \mathrm H_x - \lim_{x\nearrow y}\mathrm  H_x = 0.$ Hence
\begin{equation}\label{integralcomputation}\begin{aligned}
\int_a^{b}\mathrm I(x)\,\mathrm dx & = \int_a^{x_1}\mathrm I(x)\,\mathrm dx+\sum_{j=1}^{n(f)-1}\int_{x_j}^{x_{j+1}}\mathrm I(x)\,\mathrm dx+\int_{x_{n(f)}}^{b}\mathrm I(x)\,\mathrm dx=\\
 & = \mathrm H_a-\lim_{x\nearrow x_1}\mathrm H_x+(n(f)-1)+\lim_{x\searrow x_{n(f)}}\mathrm H_x-\mathrm H_b
\end{aligned}\end{equation}
and therefore
\begin{equation}\label{interiorzero}
n(f)=\int_a^{b}\mathrm I(x)\,\mathrm dx+\mathrm H_b-\mathrm H_a.
\end{equation}
The computation above suggests that $n(f)>1$ but one can check that formula \eqref{interiorzero} holds true for $n(f)=1$ and $n(f)=0$ as well.

If $f$ has zeros in $a$ and $b$ and therefore $x_1=a, x_{n(f)}=b$, computation \eqref{integralcomputation} gives
\begin{equation}\label{boundaryzerocompute}
n(f)=\int_a^{b}\mathrm I(x)\,\mathrm dx+1.\end{equation}
According to \eqref{limits}, $\lim\limits_{x\searrow b}\mathrm H_x-\lim\limits_{x\nearrow a}\mathrm H_x=1$ and
\eqref{boundaryzerocompute} becomes
\begin{equation}\label{boundaryzero}
n(f)=\int_a^{b}\mathrm I(x)\,\mathrm dx+\lim_{x\searrow b}\mathrm H_x-\lim_{x\nearrow a}\mathrm H_x\end{equation}
and hence \eqref{boundaryzero} counts the zeros of $f$ in $[a,b]$ since it reduces to \eqref{interiorzero} if $f(a),f(b)\ne 0$ and one can check that the remaining cases $f(a)=0\ne f(b)$ and $f(a)\ne 0=f(b)$ are also covered. Let now
$$
\mathring n(f) = \int_a^{b}\mathrm I(x)\,\mathrm dx+\lim_{x\nearrow b}\mathrm H_x-\lim_{x\searrow a}\mathrm H_x.$$
Since
$$\begin{aligned}
n(f) - \mathring n(f) & = \lim_{x\searrow b}\mathrm H_x-\lim_{x\nearrow a}\mathrm H_x-\left(\lim_{x\nearrow b}\mathrm H_x-\lim_{x\searrow a}\mathrm H_x\right)=\\&=\begin{cases}\hfill 0,& \text{if }f(a),f(b)\ne 0 \\
\hfill 1,&\text{if either }f(a)=0\text{ or }f(b)=0\\
2,&\text{if }f(a)=f(b)=0\end{cases}
\end{aligned}$$
we conclude that $\mathring n(f)$ counts the zeros of $f$ in $(a,b)$.
\end{proof}

\emph{Remarks.}
\begin{enumerate}
\item Putting $g(x) := f'(x)/f(x)$, the integrand in Theorem \ref{main} reads $
-(h\circ g)(x) g'(x).$
With respect to the signed Borel-Lebesgue-Stieltjes-Measure $\mathrm dg(x):=g'(x)\,\mathrm dx$ (see \cite{dshalalow}), the integral can be written more compactly as
$$
-\int_a^b h(g)\,\mathrm dg.
$$
\item
If $h(x):=1/(\pi(1+x^2))$, i.e.\ $h$ equals the \emph{Cauchy Density} and $f$ is an admissible $2\pi$-periodic function, then the number $n$ of zeros of $f$ in $[0,2\pi)$ equals
\begin{align}
n  =~ & \frac{1}{\pi}\Bigg[\int_0^{2\pi}\frac{f'(x)^2-f(x)f''(x)}{f(x)^2+f'(x)^2}\,\mathrm dx+\lim_{x\nearrow 2\pi}\arctan\left(\frac{f'(x)}{f(x)}\right)\nonumber \\ & -\lim_{x \nearrow 0}\arctan\left(\frac{f'(x)}{f(x)}\right)\Bigg]\nonumber=\\
 =~ & \frac{1}{\pi}\int_0^{2\pi}\frac{f'(x)^2-f(x)f''(x)}{f(x)^2+f'(x)^2}\,\mathrm dx,\label{eq-int}
\end{align}
since the integral-free terms cancel out in this case. In this way we obtain Lemma \ref{lemmaidee} as a corollary of Theorem \ref{main}. Observe that a $2\pi$-periodic $C^2$ function with an odd number of zeros on $[0,2\pi)$ gives rise to a curve $x\mapsto(f'(x),f(x))$ having a half-integer valued winding number. This idea, further developed, leads to a generalized version of the Residue Theorem (see~\cite{residuenpaper}).
\end{enumerate}


Observe, that for a $C^2$ function $f$ with only zeros of multiplicity one, the integrand in~(\ref{eq-int}) is continuous provided $h$ is continuous. This remains true for zeros of higher multiplicity in the following way:
\begin{prop}\label{prop-cont}
Let $h:\mathbb R\to\mathbb R$ be continuous and $h(x)\sim \frac C{x^2}$ for $|x|\to\infty$.
Then, the integrand in Theorem~\ref{main}
$$
\operatorname{I}:=h\left(\frac{f'}{f}\right)\frac{f'^2-ff''}{f^2}
$$
is continuous if $f\in C^n([a,b])$, $n\geqslant2$, has only zeros of multiplicity $\leqslant n$.
\end{prop}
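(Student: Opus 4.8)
The plan is to reduce the claim to a purely local statement at each zero of $f$, since away from the zero set the map $x\mapsto f'(x)/f(x)$ and the factor $(f'^2-ff'')/f^2$ are manifestly continuous (here $f\in C^n\subseteq C^2$ guarantees that $f''$ exists and is continuous) and $h$ is continuous by hypothesis. Thus it suffices to show that near each zero $x_0$ of multiplicity $m$, with $1\le m\le n$, the integrand has a finite limit, so that the a priori undefined value at $x_0$ is a removable singularity and $\mathrm I$ extends to a continuous function on $[a,b]$.

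First I would record the local structure of $f$. Since $f\in C^n$ and $f^{(j)}(x_0)=0$ for $j<m$, Taylor's theorem with integral remainder yields $f(x)=(x-x_0)^m g_0(x)$, $f'(x)=(x-x_0)^{m-1}g_1(x)$ and, for $m\ge 2$, $f''(x)=(x-x_0)^{m-2}g_2(x)$, where $g_0,g_1,g_2$ are continuous near $x_0$ with $g_j(x_0)=f^{(m)}(x_0)/(m-j)!\ne 0$. In particular $f'/f=g_1/(g_0\,(x-x_0))$, so $|f'/f|\to\infty$ as $x\to x_0$, placing the argument of $h$ in the regime where the asymptotic hypothesis applies.

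The crucial algebraic step is the factorization
$$
\mathrm I=\Bigl[h\Bigl(\tfrac{f'}{f}\Bigr)\Bigl(\tfrac{f'}{f}\Bigr)^2\Bigr]\cdot\Bigl[1-\frac{ff''}{f'^2}\Bigr],
$$
valid on a punctured neighborhood of $x_0$ (where $f'\ne 0$, as $f'$ has an isolated zero of multiplicity $m-1$). Since $h(t)t^2\to C$ as $|t|\to\infty$ and $|f'/f|\to\infty$, the first bracket tends to $C$. For the second bracket, the three factorizations give $ff''/f'^2=g_0g_2/g_1^2$ for $m\ge 2$, whose limit at $x_0$ is $\frac{((m-1)!)^2}{m!\,(m-2)!}=\frac{m-1}{m}$; for $m=1$ one has $ff''/f'^2\to 0$ directly, because $f(x_0)=0$ while $f'(x_0)\ne 0$ and $f''$ is bounded. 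Hence the second bracket tends to $1/m$ and $\mathrm I\to C/m$, a finite value (nicely reflecting the multiplicity). The same computation furnishes the one-sided limit at a zero located at an endpoint $a$ or $b$.

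The step I expect to require the most care is keeping the regularity bookkeeping correct in the borderline case $m=n$, where the factors $g_0,g_1,g_2$ are only continuous ($C^{n-m}=C^0$) rather than differentiable. This is precisely why I would avoid differentiating $f'/f$ and instead work with the ratio $ff''/f'^2$, which needs only the three Taylor factorizations and the continuity of the $g_j$, not their derivatives. A minor point to state explicitly is that ``continuous'' here is understood in the sense that the removable singularities at the zeros of $f$ are filled in by the limiting values computed above.
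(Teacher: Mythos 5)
Your proposal is correct and follows essentially the same route as the paper: Taylor factorizations $f(x)=(x-x_0)^m g_0(x)$, $f'(x)=(x-x_0)^{m-1}g_1(x)$, $f''(x)=(x-x_0)^{m-2}g_2(x)$ combined with the quadratic decay $h(t)t^2\to C$ to obtain the limit $C/m$ at each zero. Your treatment is slightly more careful than the paper's in that you handle every multiplicity $m\leqslant n$ explicitly (including the edge case $m=1$, where the $f''$ factorization degenerates), whereas the paper reduces at once to the case of a zero of multiplicity $n$; the substance of the argument is identical.
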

\begin{proof}
It suffices to show that $\operatorname{I}$ is continuous in $0$ if $x=0$ is a zero of $f$
of multiplicity $n$. Then, by Taylor expansion, we have
\begin{eqnarray*}
f(x)&=& \left(\frac{f^{(n)}(0)}{n!}+r_0(x)\right)x^n\\
f'(x)&=& \left(\frac{f^{(n)}(0)}{(n-1)!}+r_1(x)\right)x^{n-1}\\
f''(x)&=& \left(\frac{f^{(n)}(0)}{(n-2)!}+r_2(x)\right)x^{n-2}
\end{eqnarray*}
where $r_i$ are continuous functions with $\lim_{x\to 0}r_i(x)=0$. Using
these expressions in $\operatorname{I}$, we get
$$
\operatorname{I}(x)=h\left(\frac{s_1(x)}x\right)\frac{s_2(x)}{x^2}
$$
for continuous functions $s_i$ with $\lim_{x\to 0}s_i(x)=n$. Thus
$$
\operatorname{I}(x)\sim \frac{Cx^2}{s_1^2(x)}\frac{s_2(x)}{x^2}\to \frac Cn
$$
for $x\to0$.
\end{proof}
If we only assume that $h(x)=O(1/x^{2})$ for $|x|\to\infty$ in the previous proposition, the proof
shows that then $\operatorname{I}$ is at least bounded.

As a corollary of Proposition \ref{prop-cont} we obtain that if $h$ is continuous and $h(x)\sim \frac{C}{x^2}$, then $\operatorname{I}$ is in $C^0$ provided $f$ is analytic. 
Nontheless, the function $f$ may behave in the neighborhood
of a zero in such a pathological way, that $\operatorname{I}$ becomes unbounded (see Example~\ref{ex-2}.\ref{remarkmultiplicities}). 
This is why, in general, the integrals in Theorem~\ref{main} have to be interpreted as improper integrals.
This means that the concrete computation 
requires the zeros of $f$ to be known a priori in order to evaluate the improper integrals. 
It is therefore of practical importance to formulate conditions (see Propositions~\ref{prop-crit} and~\ref{prop-crit2}) with additional
assumptions which guarantee that $\operatorname{I}$ is in $L^1$: To this end we will slightly sharpen the admissibility 
condition for a function and impose some conditions on the behaviour of the zeros of $f''$ in 
neighborhoods of the zeros of $f$. Furthermore we will require $h$ to have at least quadratic 
decay at infinity.

The proof of Proposition~\ref{prop-cont} for the case $C=1$
indicates, how we can generalize the notion of multiplicity of zeros in a natural manner:
\begin{definition}\label{def-mult}
The multiplicity $\mu_f(x_0)$ of a zero $x_0$ of $f\in\mathcal A^0$ is defined to be
$$
\mu_f(x_0)=\lim_{x\to x_0}\frac{f'(x)^2}{f'(x)^2-f(x)f''(x)}.
$$
\end{definition}
Since the zeros of functions in $\mathcal A^0$ are admissible, it follows that $\mu_f(x_0)\geqslant 0$ whenever it exists, however, it can take values in $[0,\infty]$ (see Example \ref{ex-2}.\ref{remarkmultiplicities} and \ref{ex-2}.\ref{infinitemultiplicity} below). This definition of the multiplicity of a zero will be useful for a variant of Theorem \ref{main} that takes the multiplicities of the zeros into account.
\begin{example}\label{ex-2}
\begin{enumerate}
\item A function $f\in C^n$, $n\geqslant 2$ with $0=f(x_0)=f'(x_0)=\ldots=f^{(n-1)}(x_0)\neq f^{(n)}(x_0)$
has a zero of multiplicity $n$ in $x_0$: the Definition~\ref{def-mult} is compatible
with the usual notion of multiplicity.
\item The function $f(x) = |x|^r,~r>0$ has a zero of multiplicity $r$ in $x=0$.
\item \label{remarkmultiplicities} The function
$$
f(x) = \begin{cases}\displaystyle\frac{1}{\ln|x|},&x\ne0\\
\hfill 0,& x=0\end{cases}
$$
has a zero of multiplicity $0$ in $x=0$.
\item\label{infinitemultiplicity} The function $f_2$ in Example \ref{example-zero}.\ref{expzero} has a zero in $x=0$ with $\mu_{f_2}(0)=\infty$.
\end{enumerate}
\end{example}


\begin{prop}\label{prop-crit}
Let $h:\R\to\R$ be a piecewise continuous function such that $h(x)=O(1/x^{2})$ for $|x|\to\infty$ and let $f\in\mathcal A^0([a,b])\cap W^{2,1}(a,b)$ have only zeros of positive multiplicity in the sense of Definition \ref{def-mult}.
Furthermore we assume that for each zero $x_0$ we have a neighborhood $U$ such that either $f''(x)\equiv 0$ on $U\setminus\{x_0\}$ or
$$
\sum_{k=1}^\infty |z_k-x_0|<\infty,
$$
where $z_1,z_2,\ldots $ denote the countably many zeros of $f''$ in $U$.
Then
$$
\mathrm I:=h\left(\frac{f'}{f}\right)\frac{f'^2-ff''}{f^2}\in L^1(a,b).
$$
\end{prop}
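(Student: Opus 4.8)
The plan is to localize the estimate to small neighborhoods of the finitely many zeros of $f$ and to let the quadratic decay of $h$ do most of the work. Let $x_1<\cdots<x_n$ denote the zeros of $f$, which are finite in number and isolated because $f\in\mathcal A^0$. Fix $\delta>0$ so small that the intervals $U_i:=(x_i-\delta,x_i+\delta)$ are pairwise disjoint and lie in $(a,b)$, and set $K:=[a,b]\setminus\bigcup_i U_i$. On $K$, a finite union of compact intervals, $f$ is bounded away from zero, $g:=f'/f$ is continuous so that $h\circ g$ is bounded, and $f''\in L^1(K)$ because $f\in W^{2,1}$; from $|\mathrm I|\le\|h\circ g\|_{L^\infty(K)}\,c^{-2}\bigl(f'^2+|f|\,|f''|\bigr)$ with $|f|\ge c>0$ on $K$ we obtain $\mathrm I\in L^1(K)$. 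It therefore remains to show $\mathrm I\in L^1(U_i)$ for each $i$, and after a translation we may assume $x_i=0$.

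Near $0$ the decay of $h$ immediately tames the leading part. Admissibility gives $g(x)=f'(x)/f(x)\to\pm\infty$ as $x\to0$, so after shrinking $\delta$ we have $|g|\ge X_0$ on $U\setminus\{0\}$, where $X_0$ is chosen with $|h(x)|\le M/x^2$ for $|x|\ge X_0$; hence $|h(g)|\le M/g^2=Mf^2/f'^2$ there. Writing
$$
\mathrm I=h(g)\Bigl(\frac{f'}{f}\Bigr)^2-h(g)\frac{f''}{f},
$$
the first term obeys $|h(g)(f'/f)^2|=|h(g)|\,g^2\le M$ and is therefore in $L^1(U)$. The whole question thus reduces to the second term, for which the same estimate yields
$$
\Bigl|h(g)\frac{f''}{f}\Bigr|\le M\,\frac{f^2}{f'^2}\,\frac{|f''|}{|f|}=M\,\Bigl|\frac{f\,f''}{f'^2}\Bigr|.
$$

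To integrate the right-hand side I would use positive multiplicity together with the hypothesis on the zeros of $f''$. Since $\mathrm I=-h(g)g'$, it suffices to show that $g$ is piecewise monotone on $U\setminus\{0\}$ with a controlled number of monotonicity intervals: on each interval $J$ on which $g$ is monotone the substitution $u=g(x)$ gives $\int_J|h(g)\,g'|\,\mathrm dx=\int_{g(J)}|h(u)|\,\mathrm du\le\|h\|_{L^1(\R)}<\infty$. The sign of $g'=-(f'^2-ff'')/f^2$ is governed by $f'^2-ff''$, whose limiting behaviour is fixed by the multiplicity $\mu_f(0)=\lim_{x\to0}f'^2/(f'^2-ff'')\in(0,\infty]$: positivity of $\mu_f(0)$ forces $f'^2-ff''$ to keep a constant sign near $0$, so that $g$ is eventually monotone. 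When $f''\equiv0$ on $U\setminus\{0\}$ this is immediate, since then $\mathrm I=h(g)g^2$ is bounded outright; otherwise one examines the sign of $f'^2-ff''$ on the intervals cut out by the zeros $z_1,z_2,\ldots$ of $f''$, on each of which $f''$ has a fixed sign, and uses the summability $\sum_k|z_k|<\infty$ to rule out a sign change accumulating at $0$ and to add up the contributions. This bounds $\int_U|\mathrm I|$, and summing over $x_1,\ldots,x_n$ finishes the proof.

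I expect the real obstacle to be exactly the term $h(g)f''/f$: admissibility and the decay of $h$ dispose of the leading part $h(g)g^2$ for free, but $f''$ is only an $L^1$ function that may oscillate and vanish on a set accumulating at the zero, so the ratio $ff''/f'^2$ cannot be controlled pointwise without further input. Showing that positive multiplicity nevertheless pins down the sign of $f'^2-ff''$ — and that the dichotomy ``$f''\equiv0$ versus summably many zeros of $f''$'' is precisely what is needed to verify this in the oscillatory case — is the step that demands the most care.
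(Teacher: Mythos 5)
Your central idea --- write $\mathrm I=-h(g)g'$ with $g=f'/f$ and integrate by a monotone change of variables --- is sound, and your key claim that positive multiplicity forces a constant sign of $f'^2-ff''$ is in fact correct and provable in one line from Definition~\ref{def-mult}: admissibility gives $f'\neq 0$ on a punctured neighborhood of the zero, so $f'^2>0$ there, and since $f'^2/(f'^2-ff'')\to\mu_f(x_0)\in(0,\infty]$ the denominator must eventually be positive (nonnegative in the degenerate reading of $\mu=\infty$); hence $g'=-(f'^2-ff'')/f^2\leqslant 0$, $g$ is monotone on each side of the zero with $g\to\pm\infty$, and since $h\in L^1(\R)\cap L^\infty(\R)$ (piecewise continuity plus quadratic decay), the substitution gives $\int|h(g)g'|\,\mathrm dx\leqslant\|h\|_{L^1(\R)}$ on each side. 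Had you argued exactly this way, you would have a complete proof, genuinely different from the paper's, and one that never invokes the summability of the $z_k$ at all.

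The gap is that you do not argue this way in the case you yourself identify as critical: you route the oscillatory case through the hypothesis on the zeros of $f''$, and that step is a non sequitur. Constancy of the sign of $f''$ on $(z_{k+1},z_k)$ does \emph{not} fix the sign of $f'^2-ff''$ there --- at the points $z_k$ themselves one has $f'^2-ff''=f'^2>0$, so any sign changes of $f'^2-ff''$ occur strictly inside those intervals and are invisible to your decomposition --- and the condition $\sum_k|z_k|<\infty$ is a metric condition that cannot ``rule out a sign change accumulating at $0$'': the points $2^{-k}$ are summable. Moreover, even granting piecewise monotonicity of $g$ with infinitely many pieces, your per-piece bound $\|h\|_{L^1(\R)}$ does not sum. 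The paper uses the summability for an entirely different mechanism: it reduces $ff''/f'^2\in L^1$ to bounded variation of the Newton map $\mathrm N(x)=x-f(x)/f'(x)$, derives $|\mathrm N(x)|\leqslant K|x|$ from positive multiplicity via the mean value theorem, observes that the $z_k$ are precisely the critical points of $\mathrm N$ (so $\mathrm N$ is monotone between consecutive $z_k$, since the sign of $\mathrm N'=ff''/f'^2$ \emph{is} governed by $f''$), and bounds the variation on $(z_{k+1},z_k)$ by $2Kz_k$, which is summable by hypothesis. That mechanism is what survives in Proposition~\ref{prop-crit2}, where no multiplicity exists, the sign of $f'^2-ff''$ genuinely may oscillate, and your change-of-variables shortcut is unavailable. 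So: delete your fallback, justify the sign claim directly from the definition of $\mu_f$, and your proof closes --- and as a bonus it shows the summability hypothesis is superfluous under the positive-multiplicity assumption, which is worth remarking rather than obscuring.
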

\begin{proof}
Choose neighborhoods $U_1,\ldots, U_n$ of the $n$ zeros of $f$, which do not (with the possible exception of the respective zero itself) contain singular points of $f''$ or zeros of $f'$ and let
$$
\mathrm U = \bigcup_{i=1}^n U_i.
$$
Since $|f|\geqslant \eta$ for some $\eta>0$ on the complement $\mathrm U^c$ and $W^{2,1}(a,b)\hookrightarrow C^1([a,b])$
we can estimate$$
\int_{\mathrm U^c}|\mathrm I(x)|\,\mathrm dx = \eta^{-2}\|h\|_{L^\infty(\R)}\Big(\left\|f'^2\right\|_{C^0([a,b])}|b-a|+\|f\|_{C^0([a,b])}\|f''\|_{L^1(a,b)}\Big)<\infty.
$$
Consider now wlog the neighborhood $U_i$ of the zero $x_i=0$ and assume $U_i=(-\varepsilon,\varepsilon)$ for some $\varepsilon>0$. We need to show that $\mathrm I\vert_{(-\varepsilon,\varepsilon)}\in L^1$. Since $h(x)=O(1/x^{2})$ for $|x|\to\infty$, there exists a constant $C>0$ such that
\begin{equation}\label{decayestimate}
|\mathrm I(x)|\leqslant C\left(1+\left|\frac{f(x)f''(x)}{f'(x)^2}\right|\right).
\end{equation}
Note that $f f''/f'^2\in L^1(-\varepsilon,\varepsilon)$ if and only if $\mathrm N\in \mathrm{BV}(-\varepsilon,\varepsilon)$, where $\mathrm N(x) = x-f(x)/f'(x)$ denotes the \emph{Newton-Operator} of $f$ and $\mathrm{BV}(-\varepsilon,\varepsilon)$ denotes the space of functions $g:(-\varepsilon,\varepsilon)\to\R$ of bounded variation. It follows from the admissibility of the zero that $\mathrm N:(-\varepsilon,\varepsilon)\setminus\{0\}\to\R$ can be continuously extended to $\mathrm N(0)=0$ and it holds that
$$
\mathrm N'(x) = \frac{f(x)f''(x)}{f'(x)^2},
$$
for $x\ne 0$. Let $\mu>0$ denote the multiplicity of the zero according to Definition \ref{def-mult}. It holds that
$$
\lim_{x\to0}\mathrm N'(x)=\begin{cases}\displaystyle\hfill\frac{\mu-1}{\mu},& \mu < \infty \\ \hfill 1,&\mu=\infty.\end{cases}
$$
According to the mean value theorem we have $\mathrm N(x)/x = \mathrm N'(\xi)$ for some $\xi$ between $0$ and $x$ and deduce that $\mathrm N\in C^1(-\varepsilon,\varepsilon)$. The Taylor expansion of $\mathrm N$ around $x=0$ is given by
$$
\mathrm N(x) =\begin{cases}\displaystyle\frac{\mu -1}{\mu} x + o(x),&\mu<\infty \\ \hfill x+o(x),&\mu=\infty.\end{cases}
$$
In any case there exists a constant $K>0$ such that
\begin{equation}\label{newton-estimate}
|\mathrm N(x)| \leqslant K|x|,\quad |x|<\varepsilon.
\end{equation}

We will now show that $\mathrm N\in \mathrm{BV}([0,\varepsilon))$, the argument on $(-\varepsilon,0]$ being similar. We start by noticing that $\mathrm N$ is absolutely continuous on $[\delta,\varepsilon)$ for every $0<\delta<\varepsilon$ since $x$, $f(x)$ and $f'(x)$ are absolutely continuous and $f'(x)\ne 0$ on $[\delta,\varepsilon)$. In particular, $\mathrm N\in \mathrm{BV}([\delta,\varepsilon))$ for every $0<\delta<\varepsilon$.

We will now distinguish two cases: If $f''\equiv 0$ on $(0,\varepsilon)$, then $\mathrm N \equiv 0$ and we are done. In the remaining case we first consider the case when the set of zeros of $f''$ in $(0,\varepsilon)$ is empty: Then $\mathrm N$ is monotone on $[0,\varepsilon)$ and hence $\mathrm N\in\mathrm{BV}([0,\varepsilon))$. Otherwise the zeros of $f''$ in $[0,\varepsilon)$ are given by $z_1>z_2>\ldots$ and we may set $\delta:=z_1$. According to \eqref{newton-estimate} and since the zeros of $f''$ are precisely the zeros of $\mathrm N'$ we can estimate the total variation of $\mathrm N$ on $(z_{k+1},z_k)$ by
$$
\int_{z_{k+1}}^{z_{k}} |\mathrm N'(x)|\,\mathrm dx\leqslant 2Kz_k.
$$
The total variation of $\mathrm N$ on $[0,\varepsilon)$ is bounded by
$$
\sum_{k = 1}^\infty \int_{z_{k+1}}^{z_{k}} |\mathrm N'(x)|\,\mathrm dx + \int_{\delta}^\varepsilon |\mathrm N'(x)|\,\mathrm dx\leqslant 2K\sum_{k=1}^\infty z_k + \int_{\delta}^\varepsilon |\mathrm N'(x)|\,\mathrm dx,
$$
where the series converges by assumption and the integral is finite since $\mathrm N\in\mathrm{BV}([\delta,\varepsilon))$.
We conclude that $\mathrm N\in\mathrm{BV}([0,\varepsilon))$, which finishes the proof.
\end{proof}

\emph{Remark.}
The key estimate \eqref{newton-estimate} in the proof above follows from the admissibility and the positive multiplicity of the zeros. We will however formulate a variant of Proposition \ref{prop-crit} below (Proposition \ref{prop-crit2}), which covers admissible functions that have zeros of ill-defined multiplicity for which \eqref{newton-estimate} still holds true: Take e.g.\ the $C^1$ function $f:x\mapsto x^3 \left(\sin(1/x) + 2\right) + x$ which has an admissible zero in $x=0$, but for which $\mu_f(0)$ does not exist, however, \eqref{newton-estimate} holds true since $f(x)/(xf'(x))$ is bounded near $0$ -- in fact
$$
\lim_{x\to 0}\frac{f(x)}{xf'(x)}=1.
$$
Example~\ref{ex-2}.\ref{remarkmultiplicities} shows an admissible function for which \eqref{newton-estimate} does not hold true. In the mentioned example, the first derivative is unbounded. But even functions with higher regularity may behave in such a pathological way near an admissible zero, that \eqref{newton-estimate} does not hold true, as the following example shows:

\begin{example}\label{ex-counter}
Let $$
k(x) = \begin{cases} \hfill x^3+\left(\sqrt{|x|^7}-x^3\right)\cos\left(\pi\log_2|x|\right), & \text{if $x\ne 0$}\\
\hfill 0,& \text{if $x=0$.}\end{cases}$$
Then $f(x) = \int_0^x k(t)\,\mathrm dt$ is of class $C^3$ and has an admissible zero in $x=0$ but $f(x)/(xf'(x))$ is unbounded near 0.
\end{example}


\begin{prop}\label{prop-crit2}
Let $h:\R\to\R$ be a piecewise continuous function such that $h(x)=O(1/x^{2})$ for $|x|\to\infty$ and let $f\in\mathcal A^0([a,b])\cap W^{2,1}(a,b)$ be such that that for every zero $x_0$ of $f$ there exists a relatively open neighborhood $U\subset[a,b]$ such that
\begin{equation}\label{condition}
0<\left|\frac{f(x)}{(x-x_0) f'(x)}\right|<\widetilde K
\end{equation}
on $U\setminus\{x_0\}$ and
such that either $f''\equiv 0$ on $U\setminus\{x_0\}$, or
$$
\sum_{k=1}^\infty |z_k-x_0|<\infty,
$$
where $z_1,z_2,\ldots $ denote the countably many zeros of $f''$ in $U\setminus\{x_0\}$.
Then
$$
\mathrm I:=h\left(\frac{f'}{f}\right)\frac{f'^2-ff''}{f^2}\in L^1(a,b).
$$
\end{prop}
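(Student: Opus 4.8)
The plan is to mirror the proof of Proposition~\ref{prop-crit} almost line for line, since the two statements differ only in the hypothesis that produces the Newton operator estimate \eqref{newton-estimate}. As the Remark following Proposition~\ref{prop-crit} makes explicit, that entire argument rests on \eqref{newton-estimate}, and the positive-multiplicity assumption entered only as a vehicle for deriving it through the Taylor expansion of $\mathrm{N}(x)=x-f(x)/f'(x)$. Under the present hypotheses the linear bound \eqref{newton-estimate} is instead supplied directly by \eqref{condition}, so the remaining structure can be transcribed verbatim.

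First I would reproduce the global splitting of $(a,b)$ into a union $\mathrm{U}=\bigcup_{i=1}^n U_i$ of disjoint neighborhoods of the finitely many zeros (each free of singular points of $f''$ and of zeros of $f'$, apart from the zero itself) and its complement $\mathrm{U}^c$. On $\mathrm{U}^c$ one has $|f|\geqslant\eta>0$, and the embedding $W^{2,1}(a,b)\hookrightarrow C^1([a,b])$ yields the same bound $\int_{\mathrm{U}^c}|\mathrm{I}|<\infty$ as before; this step is untouched. Fixing a zero at $x_0=0$ with $U_i=(-\varepsilon,\varepsilon)$, the quadratic decay $h(x)=O(1/x^2)$ gives the estimate \eqref{decayestimate}, which reduces the claim to $ff''/f'^2\in L^1(-\varepsilon,\varepsilon)$, equivalently to $\mathrm{N}\in\mathrm{BV}(-\varepsilon,\varepsilon)$, since $\mathrm{N}'(x)=f(x)f''(x)/f'(x)^2$ away from $0$.

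The one genuinely new step is to establish \eqref{newton-estimate}. Rewriting \eqref{condition} as $|f(x)/f'(x)|=|x|\,|f(x)/(xf'(x))|<\widetilde K|x|$, I would conclude
$$
|\mathrm{N}(x)|\leqslant |x|+\left|\frac{f(x)}{f'(x)}\right|<(1+\widetilde K)|x|,
$$
so that \eqref{newton-estimate} holds with $K=1+\widetilde K$. The same bound shows $f(x)/f'(x)\to 0$, whence $\mathrm{N}$ extends continuously with $\mathrm{N}(0)=0$, exactly as needed. Note that, in contrast to Proposition~\ref{prop-crit}, I would \emph{not} assert $\mathrm{N}\in C^1$: that regularity was only a device for the Taylor expansion and may genuinely fail here, since the multiplicity $\mu_f(0)$ need not exist — precisely the situation \eqref{condition} is designed to accommodate.

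With \eqref{newton-estimate} secured, the bounded-variation argument carries over unchanged. The function $\mathrm{N}$ is absolutely continuous on each $[\delta,\varepsilon)$; if $f''\equiv 0$ on $(0,\varepsilon)$ then $\mathrm{N}\equiv 0$, while if $f''$ has no zeros there, then $\mathrm{N}'=ff''/f'^2$ has constant sign, so $\mathrm{N}$ is monotone and, being bounded by $K\varepsilon$ and continuous up to $0$, lies in $\mathrm{BV}([0,\varepsilon))$. In the remaining case, enumerating the zeros $z_1>z_2>\cdots$ of $f''$ in $[0,\varepsilon)$ and setting $\delta:=z_1$, the sign of $\mathrm{N}'$ is constant on each $(z_{k+1},z_k)$, so the variation there equals $|\mathrm{N}(z_k)-\mathrm{N}(z_{k+1})|\leqslant 2Kz_k$ by \eqref{newton-estimate}; summing against the convergent series $\sum_k z_k$ and adding the finite variation on $[\delta,\varepsilon)$ bounds the total variation on $[0,\varepsilon)$, and the argument on $(-\varepsilon,0]$ is symmetric. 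The main obstacle is thus conceptual rather than technical: recognizing that \eqref{condition} is exactly the replacement for positive multiplicity that still delivers \eqref{newton-estimate}, after which every other estimate is inherited from the proof of Proposition~\ref{prop-crit}.
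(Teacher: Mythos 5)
Your proposal is correct and follows essentially the same route as the paper's own proof: both reduce everything to the key bound \eqref{newton-estimate}, derive it directly from \eqref{condition} via $|f(x)/f'(x)|<\widetilde K|x|$ (so $K=\widetilde K+1$), and then reuse the splitting into $\mathrm U$, $\mathrm U^c$ and the bounded-variation argument from Proposition~\ref{prop-crit} verbatim. Your explicit observation that $\mathrm N\in C^1$ can no longer be asserted (and is not needed) is a correct refinement of what the paper leaves implicit.
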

\begin{proof}
Choose neighborhoods $U_1,\ldots, U_n$ of the $n$ zeros of $f$, which do not (with the possible exception of the respective zero itself) contain singular points of $f''$ or zeros of $f'$ such that \eqref{condition} holds on each punctured neighborhood. As in the proof of Proposition \ref{prop-crit} we obtain $\|\mathrm I\|_{L^1(\mathrm U^c)}<\infty$, where $\mathrm U = U_1\cup \ldots \cup U_n$ and the estimate \eqref{decayestimate}. Let wlog $0$ be a zero of $f$ and let $(-\varepsilon,\varepsilon)$ be its respective neighborhood for some $\varepsilon>0$. As in the proof of Proposition \ref{prop-crit}, we are done if we show that $\mathrm N\in\mathrm{BV}([0,\varepsilon))$. The condition $0<|f(x)/(x f'(x))|<\widetilde K$ on $(-\varepsilon,\varepsilon)\setminus\{0\}$ implies that 
\begin{equation}\label{condition2}
0<\left|\frac{f(x)}{f'(x)}\right|< \widetilde K|x|,
\end{equation}
from which we conclude that $\mathrm N$ extends continuously to $[0,\varepsilon)$ (where $\mathrm N(0)=0$) and
\begin{equation}\label{boundonn}
|\mathrm N(x)| \leqslant (\widetilde K+1)\,x,\quad x\in[0,\varepsilon).
\end{equation}
This is just estimate \eqref{newton-estimate} with $K=\widetilde K+1$. The rest of the proof is exactly the same as the one of Proposition \ref{prop-crit}.
\end{proof}

\section{Counting Zeros with Multiplicities}
Let again $h:\R\to\R$ be a piecewise continuous function such that $\int_{-\infty}^{\infty}h(x)\,\mathrm dx = 1$ and define $\mathrm H$ as before in~(\ref{eq-H}). Moreover, let

$$\begin{aligned}
\mathrm I_g(x) &=h\left(\frac{f'(x)}{f(x)}\right)g(x)\frac{f'(x)^2-f(x)f''(x)}{f(x)^2}-\mathrm H\left(\frac{f'(x)}{f(x)}\right) g'(x),\\
g_1(x) &= \frac{f'(x)^2}{f'(x)^2-f(x)f''(x)+cf(x)^2},\\
g_2(x)& =\exp\left(\frac{f'(x)^2-f(x)f''(x)}{f'(x)^2+f(x)^2}\right),
\end{aligned}$$
where $c\in\R$.
Note that if $x_0$ is a zero of multiplicity $\mu_f(x_0)$, then $g_1(x) \to \mu_f(x_0)$ as $x\to x_0$ for every value $c$ in the definition of $g_1$ and if $\mu_f(x_0)>0$, then $g_2(x) \to \exp\big(\frac{1}{\mu_f(x_0)}\big)$ as $x\to x_0$.


\begin{lemma}
Let all the zeros of $f\in\mathcal A^0([a,b])\cap C^2([a,b])$ have well-defined multiplicities. Then there exists $c\in \R$ such that $g_1$ has no poles.
\end{lemma}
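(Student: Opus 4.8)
The plan is to separate the possible poles of $g_1$ into those lying on the set $\{f\neq 0\}$, which depend on $c$, and those sitting at the zeros of $f$, which do not. On $\{f\neq 0\}$ I would divide numerator and denominator of $g_1$ by $f^2$ and write $g_1=\bigl(f'/f\bigr)^2/(\psi+c)$, where $\psi:=(f'^2-ff'')/f^2$. Thus, off the zero set, $g_1$ is finite exactly where $\psi+c\neq 0$, and it suffices to choose $c$ outside the range of $-\psi$. The whole problem therefore reduces to proving that $\psi$ is bounded below on $\{f\neq 0\}$: for any $c>-\inf\psi$ one then has $\psi+c>0$, hence $f'^2-ff''+cf^2=f^2(\psi+c)>0$ on $\{f\neq 0\}$ and no pole survives there.

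To bound $\psi$ below, the key is its behaviour near a zero $x_0$, which I would read off from the factorization $\psi=\bigl(f'/f\bigr)^2\cdot(f'^2-ff'')/f'^2$. The first factor tends to $+\infty$ because the zero is admissible, so that $|f'/f|\to\infty$ by \eqref{def-admissible-equivalent}; the second factor is the reciprocal of the quotient defining $\mu_f(x_0)$ in Definition~\ref{def-mult}, hence converges to $1/\mu_f(x_0)\in[0,\infty]$ by the hypothesis that every multiplicity is well defined. Since this limit is nonnegative, and whenever it is nonzero—that is, whenever $\mu_f(x_0)<\infty$—the product forces $\psi\to+\infty$; in particular $\psi$ is bounded below on a punctured neighbourhood of each zero. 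Away from small neighbourhoods of the finitely many zeros, $f$ is bounded away from $0$ and $f\in C^2([a,b])$, so $\psi$ is continuous on a compact set and thus bounded. Patching the two estimates gives $\inf_{\{f\neq0\}}\psi>-\infty$, which is precisely what the reduction requires.

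It then remains to check the zeros of $f$, where the factorization fails because $f$ vanishes. At a simple zero one has $f'(x_0)\neq 0$, so the denominator equals $f'(x_0)^2>0$ and $g_1$ is regular; at a multiple zero numerator and denominator both vanish, but the remark preceding the lemma already records that $g_1\to\mu_f(x_0)$, so the singularity is removable once $\mu_f(x_0)$ is finite. With $c$ chosen as above, $g_1$ is therefore free of poles.

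The delicate point is the lower bound on $\psi$ near the zeros, and within it the borderline case $\mu_f(x_0)=\infty$: there the second factor $(f'^2-ff'')/f'^2$ degenerates to $0$ and competes with the factor $(f'/f)^2\to+\infty$, so the clean conclusion $\psi\to+\infty$ is no longer automatic from the product alone. Exactly at such a zero, however, $g_1\to\mu_f(x_0)=\infty$ regardless of $c$, so the relevant case for the lemma is that of positive, finite multiplicity, where the factorization settles everything at once. I would therefore run the main argument in that regime and, if the degenerate case must be included, replace the product heuristic by a direct estimate of $f^2(\psi+c)$ on the punctured neighbourhood built from the same Taylor/asymptotic data that underlies Proposition~\ref{prop-cont}.
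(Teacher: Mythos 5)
Your proof is correct and takes essentially the same route as the paper: both arguments split $[a,b]$ into neighborhoods of the finitely many zeros, where the limit $g_1(x)\to\mu_f(x_0)$ makes the singularity removable (your factorization $\psi=\bigl(f'/f\bigr)^2\cdot\bigl(f'^2-ff''\bigr)/f'^2$ is just a fleshed-out version of this), and the compact complement, where $|f|\geqslant\delta$ and the denominator is made positive by taking $c$ large --- your condition $c>-\inf\psi$ coincides in substance with the paper's explicit choice $c>\delta^{-2}\|ff''\|_{C^0([a,b])}$, since $-\psi\leqslant\delta^{-2}\|ff''\|_{C^0([a,b])}$ on that complement. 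Your closing caveat about $\mu_f(x_0)=\infty$ is well taken: the paper's proof silently assumes all multiplicities are finite (for a zero with $\mu_f(x_0)=\infty$, e.g.\ $f_2$ from Example~\ref{example-zero}.\ref{expzero}, one has $g_1\to\infty$ for every $c$), so running the argument in the finite-multiplicity regime, as you do, matches the paper's actual scope.
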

\begin{proof}
If $x_0$ is a zero of $f$, we have that $g_1(x)\to\mu_f(x_0)$ as $x\to x_0$. In other words $g_1$ extends continuously to the zeros of $f$. Hence there are open neighborhoods of the zeros of $f$, where $g_1$ has no poles. On the complement of these neighborhoods, there exists a number $\delta>0$ such that $|f(x)|\geqslant\delta$. Hence $f'(x)^2+cf(x)^2\geqslant f'(x)^2+c\delta^2$. If we choose $c>\delta^{-2}\|ff''\|_{C^0([a,b])}$, then $g_1$ has no poles. In particular, if $f$ is analytic, this choice of $c$ ensures that $g_1$ is analytic as well.\end{proof}

We have the following theorem for analytic functions $f:[a,b]\to\R$:

\begin{theorem}\label{mainanalytic}
Let $f:[a,b]\to\R$ be an analytic function and choose $c$ in the definition of $g_1$ such that $g_1$ is analytic. If $h(x) = O(1/x^2)$ for $|x|\to\infty$, then $\mathrm I_{g_1}, \mathrm I_{g_2}\in L^\infty(a,b)$ and if $f$ has $n_\ell$ zeros of multiplicity $\ell$ in $[a,b]$ and $\mathring n_\ell$ zeros of mutliplicity $\ell$ in $(a,b)$, then
$$\begin{aligned}
\int_a^{b}\mathrm I_{g_1}(x)\,\mathrm dx + \lim_{x\searrow b}\left[\mathrm H\left(\tfrac{f'(x)}{f(x)}\right)g_1(x)\right]-\lim_{x\nearrow a}\left[\mathrm H\left(\tfrac{f'(x)}{f(x)}\right)g_1(x)\right] & =\sum_{\ell=1}^\infty n_\ell \ell,\\
\int_a^{b}\mathrm I_{g_1}(x)\,\mathrm dx + \lim_{x\nearrow b}\left[\mathrm H\left(\tfrac{f'(x)}{f(x)}\right)g_1(x)\right]-\lim_{x\searrow a}\left[\mathrm H\left(\tfrac{f'(x)}{f(x)}\right)g_1(x)\right] & =\sum_{\ell=1}^\infty  \mathring n_\ell \ell,\\
\int_a^{b}\mathrm I_{g_2}(x)\,\mathrm dx + \lim_{x\searrow b}\left[\mathrm H\left(\tfrac{f'(x)}{f(x)}\right)g_2(x)\right]-\lim_{x\nearrow a}\left[\mathrm H\left(\tfrac{f'(x)}{f(x)}\right)g_2(x)\right]&=\sum_{\ell=1}^\infty n_\ell\exp\left(\tfrac{1}{\ell}\right),\\
\int_a^{b}\mathrm I_{g_2}(x)\,\mathrm dx + \lim_{x\nearrow b}\left[\mathrm H\left(\tfrac{f'(x)}{f(x)}\right)g_2(x)\right]-\lim_{x\searrow a}\left[\mathrm H\left(\tfrac{f'(x)}{f(x)}\right)g_2(x)\right]&=\sum_{\ell=1}^\infty \mathring n_\ell\exp\left(\tfrac{1}{\ell}\right).
\end{aligned}$$
\end{theorem}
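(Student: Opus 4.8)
The plan is to recognize that $\mathrm I_g$ is an exact derivative, which collapses the integral to boundary evaluations. Writing $\phi:=f'/f$, the elementary identity $\frac{f'^2-ff''}{f^2}=-\phi'$ together with the product rule gives
$$
\frac{\mathrm d}{\mathrm dx}\Big[\mathrm H(\phi)\,g\Big]=h(\phi)\,\phi'\,g+\mathrm H(\phi)\,g'=-\Big(h(\phi)\,g\,\tfrac{f'^2-ff''}{f^2}-\mathrm H(\phi)\,g'\Big)=-\mathrm I_g,
$$
so that $\mathrm I_g=-\frac{\mathrm d}{\mathrm dx}\big[\mathrm H(\phi)g\big]$ at every point where $f\ne 0$. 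Since $f$ is analytic, $f''$ has no singularities and $\phi$ is analytic on each open interval between consecutive zeros; the antiderivative $-\mathrm H(\phi)g$ is therefore available there, and the computation parallels the proof of Theorem~\ref{main}, now carrying the weight $g$.

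Next I would carry out the telescoping. On an interval $(x_j,x_{j+1})$ between consecutive zeros the fundamental theorem of calculus yields $\int_{x_j}^{x_{j+1}}\mathrm I_g=\lim_{x\searrow x_j}[\mathrm H(\phi)g]-\lim_{x\nearrow x_{j+1}}[\mathrm H(\phi)g]$. By admissibility $\phi\to+\infty$ as $x\searrow x_j$ and $\phi\to-\infty$ as $x\nearrow x_{j+1}$, so $\mathrm H(\phi)$ tends to $\mathrm H(+\infty)$ resp.\ $\mathrm H(-\infty)$, while $g$ extends continuously to the zero with the value recorded just before the Lemma, namely $g_1\to\mu_f$ and $g_2\to\exp(1/\mu_f)$. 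Summing over all interior intervals and using $\mathrm H(+\infty)-\mathrm H(-\infty)=1$, each interior zero $x_j$ contributes exactly $g(x_j)$, while the two extreme intervals generate the boundary terms. Collecting weights gives $\sum_j g(x_j)$, which for $g_1$ equals $\sum_j\mu_f(x_j)=\sum_\ell n_\ell\ell$ and for $g_2$ equals $\sum_\ell n_\ell\exp(1/\ell)$; the leftover boundary contributions assemble into the stated limits of $\mathrm H(\phi)g$ at $a$ and $b$.

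The boundary bookkeeping — distinguishing $f(a),f(b)\ne0$ from the cases with a zero at an endpoint, and the two sign conventions $\lim_{x\searrow b}/\lim_{x\nearrow a}$ versus $\lim_{x\nearrow b}/\lim_{x\searrow a}$ — is handled exactly as in Theorem~\ref{main}, using the antisymmetric extension convention from \eqref{limits} to interpret the one-sided limits. The difference between the two conventions is whether an endpoint zero is counted, which is precisely what separates $n_\ell$ from $\mathring n_\ell$.

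It remains to prove the integrability statement $\mathrm I_{g_1},\mathrm I_{g_2}\in L^\infty(a,b)$, and I expect this to be the main obstacle. I would split $\mathrm I_g=g\cdot\mathrm I-\mathrm H(\phi)g'$, where $\mathrm I=h(\phi)\frac{f'^2-ff''}{f^2}$ is the integrand of Proposition~\ref{prop-cont}; by the remark following that proposition, $h=O(1/x^2)$ forces $\mathrm I$ to be bounded for analytic $f$, all of whose zeros have finite multiplicity. Moreover $\mathrm H$ is bounded because $\int_{-\infty}^\infty h=1$ converges. Everything thus reduces to the boundedness of $g$ and $g'$ \emph{across} the zeros, which is where analyticity is essential: writing $f=(x-x_0)^\mu u$ with $u(x_0)\ne0$ at a zero of multiplicity $\mu$, both numerator and denominator of $g_1$ and of the exponent $E:=(f'^2-ff'')/(f'^2+f^2)$ of $g_2$ carry the common factor $(x-x_0)^{2\mu-2}$, after whose cancellation one is left with ratios of analytic functions whose denominators do not vanish at $x_0$. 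Hence $g_1$ and $E$ extend \emph{real-analytically} across every zero (with values $\mu$ and $1/\mu$), so $g_1$ and $g_2=\exp(E)$ are analytic on all of $[a,b]$; being $C^1$ on a compact interval, $g_1,g_2$ and their derivatives are bounded, which closes the estimate. The choice of $c$ furnished by the preceding Lemma rules out poles of $g_1$ at the remaining points, completing the argument.
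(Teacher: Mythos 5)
Your proposal is correct and follows essentially the same route as the paper: the integral identities come from telescoping the exact derivative $-\frac{\mathrm d}{\mathrm dx}\left[\mathrm H\left(\frac{f'}{f}\right)g\right]=\mathrm I_g$ exactly as in Theorem~\ref{main}, and the $L^\infty$-bound rests on the same local factorization $f(x)=(x-x_0)^k j(x)$ at a zero, which the paper uses to compute one-sided limits of $\mathrm H\left(\frac{f'}{f}\right)g_i'$ and of $h\left(\frac{f'}{f}\right)g_i\,\frac{f'^2-ff''}{f^2}$ (the latter via $|h(y)|\leqslant C/y^2$, i.e.\ the boundedness of $\mathrm I$ you import from the remark after Proposition~\ref{prop-cont}). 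Your packaging of the boundedness step --- cancelling the common factor $(x-x_0)^{2\mu-2}$ to show $g_1$ and the exponent of $g_2$ extend real-analytically across the zeros, so that $g,g'$ are bounded and $\mathrm I_g=g\,\mathrm I-\mathrm H\left(\frac{f'}{f}\right)g'$ is bounded --- is a mild and valid restatement of the paper's direct limit computations, not a genuinely different argument.
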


\begin{proof}
We first prove the $L^\infty$-bounds: It suffices to show that $\mathrm I_{g_1}$ and $\mathrm I_{g_2}$ are bounded near the zeros of $f$. Let $x_0$ be a zero of multiplicity $k$ and write (locally) $f(x) = (x-x_0)^kj(x)$, where $j$ is analytic and $j(x_0)\ne0$.
Since
$$
\lim_{x\to x_0}g_1'(x) = \frac{2j'(x_0)}{j(x_0)}
$$
we find the limits
$$\begin{aligned}
\lim_{x\searrow x_0}\mathrm H\left(\frac{f'(x)}{f(x)}\right) g_1'(x) & =  \frac{2j'(x_0)}{j(x_0)}\\
\lim_{x\nearrow x_0}\mathrm H\left(\frac{f'(x)}{f(x)}\right) g_1'(x) & =  0.
\end{aligned}$$
If
$$
h\left(\frac{f'(x)}{f(x)}\right)g(x)\frac{f'(x)^2-f(x)f''(x)}{f(x)^2}
$$
is bounded near $x_0$, the claim follows. Since $\left|h\left({f'(x)}/{f(x)}\right)\right|\leqslant C\cdot{f(x)^2}/{f'(x)^2}$ and
$$
\lim_{x\to x_0}C |g_1(x)| \frac{f'(x)^2+|f(x)f''(x)|}{f'(x)^2} = C(2k-1) ,
$$
we obtain $\mathrm I_{g_1}\in L^\infty(a,b)$. For $\mathrm I_{g_2}$, observe that
$$
\lim_{x\to x_0}g_2'(x) = -\frac{2\exp\left(\tfrac{1}{k}\right)j'(x_0)}{k^2j(x_0)}
$$
and therefore
$$\begin{aligned}
\lim_{x\searrow x_0}\mathrm H\left(\frac{f'(x)}{f(x)}\right) g_2'(x) & = -\frac{2\exp\left(\tfrac{1}{k}\right)j'(x_0)}{k^2j(x_0)}\\
\lim_{x\nearrow x_0}\mathrm H\left(\frac{f'(x)}{f(x)}\right) g_2'(x) & =  0.
\end{aligned}$$
Proceeding as for $g_1$ we find
$$
\lim_{x\to x_0}C |g_2(x)| \frac{f'(x)^2+|f(x)f''(x)|}{f'(x)^2} = C\exp\left(\tfrac{1}{k}\right)\frac{2k-1}{k}
$$
and hence $\mathrm I_{g_2}\in L^\infty(a,b)$. The computation of the integrals is done as in the proof of Theorem \ref{main}.
\end{proof}

\emph{Remark.}
 If $f\in\mathcal A^1([a,b])\cap C^2([a,b])$ only has zeros of well-defined multiplicities and if the set of zeros of $f$ in $(a,b)$ is given by $\mathring{ N}$ and the set of zeros of $f$ in $[a,b]$ by $ N$, then
$$\begin{aligned}
\int_a^{b}\mathrm I_{g_1}(x)\,\mathrm dx + \lim_{x\searrow b}\left[\mathrm H\left(\tfrac{f'(x)}{f(x)}\right)g_1(x)\right]-\lim_{x\nearrow a}\left[\mathrm H\left(\tfrac{f'(x)}{f(x)}\right)g_1(x)\right] & =\sum_{x\in { N}} \mu_f(x),\\
\int_a^{b}\mathrm I_{g_1}(x)\,\mathrm dx + \lim_{x\nearrow b}\left[\mathrm H\left(\tfrac{f'(x)}{f(x)}\right)g_1(x)\right]-\lim_{x\searrow a}\left[\mathrm H\left(\tfrac{f'(x)}{f(x)}\right)g_1(x)\right] & =\sum_{x\in \mathring{ N}} \mu_f(x).
\end{aligned}$$

\begin{lemma}
Let $\mathcal N$ be the set of sequences with natural entries of which only finitely many are non-zero. Then the map $\mathcal F:\mathcal N\to\R$ defined by $
\mathcal F(k_1,\ldots) = \sum_{\ell=1}^\infty k_\ell \exp\left(\tfrac{1}{\ell}\right)$ is injective.
\end{lemma}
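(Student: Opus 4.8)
The plan is to recognize that injectivity of $\mathcal F$ is exactly a statement of linear independence over $\mathbb Q$ of the numbers $e^{1/\ell}$, and then to invoke the Lindemann--Weierstrass theorem.

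First I would unwind the definition. Suppose $(k_1,k_2,\ldots)$ and $(k_1',k_2',\ldots)$ in $\mathcal N$ have the same image under $\mathcal F$. Putting $m_\ell := k_\ell - k_\ell'$ yields a sequence of integers, only finitely many of which are nonzero, satisfying the finite relation
\[
\sum_{\ell=1}^{N} m_\ell\, e^{1/\ell} = 0,
\]
where $N$ bounds the common support of the two sequences. Thus $\mathcal F$ is injective if and only if every such finite integer relation is trivial; equivalently, if and only if the family $\{e^{1/\ell}\}_{\ell\geqslant 1}$ is linearly independent over $\mathbb Q$ (clearing denominators shows that $\mathbb Z$-independence and $\mathbb Q$-independence coincide here).

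Next I would quote the Lindemann--Weierstrass theorem in the form: if $\alpha_1,\ldots,\alpha_N$ are pairwise distinct algebraic numbers, then $e^{\alpha_1},\ldots,e^{\alpha_N}$ are linearly independent over the field $\overline{\mathbb Q}$ of algebraic numbers. Applying this with $\alpha_\ell := 1/\ell$, which are pairwise distinct rationals and hence pairwise distinct algebraic numbers, shows that $e^{1/1},\ldots,e^{1/N}$ are linearly independent over $\overline{\mathbb Q}$, and a fortiori over $\mathbb Q$. Consequently the only integer relation of the above form is the trivial one, $m_1=\cdots=m_N=0$, so that $k_\ell=k_\ell'$ for all $\ell$, which proves injectivity.

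The main (and really the only) obstacle is the arithmetic input: the linear independence of the $e^{1/\ell}$ is a genuine transcendence statement, not something accessible by elementary real-analytic manipulation --- already the irrationality of a single $e^{1/\ell}$ lies deeper than the bookkeeping of the first paragraph. Once Lindemann--Weierstrass is granted, the reduction is immediate and no further estimates are required.
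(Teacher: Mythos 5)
Your proof is correct and follows essentially the same route as the paper: both form the difference of the two finite sums and conclude that all coefficients vanish by the Lindemann--Weierstrass theorem. Your write-up merely makes explicit the precise form of that theorem (linear independence of $e^{\alpha_1},\ldots,e^{\alpha_N}$ over $\overline{\mathbb Q}$ for distinct algebraic $\alpha_\ell=1/\ell$), which the paper leaves implicit.
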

\begin{proof}
The difference $\mathcal F(k_1,\ldots )-\mathcal F(k'_1,\ldots )$ is equal to the finite sum $$
\sum_{\ell=1}^\infty(k_\ell-k'_\ell) \exp\left(\tfrac{1}{\ell}\right).$$ If this sum vanishes, $k_\ell=k'_\ell$ for all $\ell$ by the von~Lindemann-Weierstrass theorem (see~\cite[\S 3]{weierstrass}).
\end{proof}
\begin{coro}
Let $f:[a,b]\to \R$ be analytic. If $f$ has $n_\ell$ zeros of multiplicity $\ell$ in $[a,b]$ and $\mathring n_\ell$ zeros of mutliplicity $\ell$ in $(a,b)$, then
$$\begin{aligned}
(n_1,\ldots)& =\mathcal F^{-1}\left(\int_a^{b}\mathrm I_{g_2}(x)\,\mathrm dx + \lim_{x\searrow b}\left[\mathrm H\left(\tfrac{f'(x)}{f(x)}\right)g_2(x)\right]-\lim_{x\nearrow a}\left[\mathrm H\left(\tfrac{f'(x)}{f(x)}\right)g_2(x)\right]\right)\\
(\mathring n_1,\ldots)& = \mathcal F^{-1}\left(\int_a^{b}\mathrm I_{g_2}(x)\,\mathrm dx + \lim_{x\nearrow b}\left[\mathrm H\left(\tfrac{f'(x)}{f(x)}\right)g_2(x)\right]-\lim_{x\searrow a}\left[\mathrm H\left(\tfrac{f'(x)}{f(x)}\right)g_2(x)\right]\right).\end{aligned}
$$
\end{coro}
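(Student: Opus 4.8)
The plan is purely one of assembly: the two identities to be proved are obtained by composing Theorem~\ref{mainanalytic} with the injectivity statement of the Lemma just established. That Lemma shows $\mathcal F$ is injective, so it admits a well-defined inverse on its image $\mathrm{im}\,\mathcal F\subset\R$; the whole task reduces to checking that the right-hand expressions in the corollary lie in $\mathrm{im}\,\mathcal F$ and equal $\mathcal F$ applied to the desired multiplicity-count sequences.

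First I would record that the sequences $(n_1,n_2,\ldots)$ and $(\mathring n_1,\mathring n_2,\ldots)$ genuinely belong to the domain $\mathcal N$ of $\mathcal F$. Since $f$ is analytic on the compact interval $[a,b]$ and is not identically zero (an identically vanishing function has non-isolated zeros and so does not lie in $\mathcal A^0$), its zero set is finite, and each zero has finite order of vanishing $\ell\in\N$; by Example~\ref{ex-2} this classical order coincides with the multiplicity $\mu_f$ of Definition~\ref{def-mult}. Hence only finitely many of the $n_\ell$ (respectively $\mathring n_\ell$) are non-zero, so both sequences lie in $\mathcal N$.

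Next I would invoke the third and fourth identities of Theorem~\ref{mainanalytic} (with $c$ chosen so that $g_1$, and hence the construction underlying $\mathrm I_{g_2}$, is well behaved), which state precisely that
$$
\int_a^{b}\mathrm I_{g_2}(x)\,\mathrm dx + \lim_{x\searrow b}\left[\mathrm H\left(\tfrac{f'(x)}{f(x)}\right)g_2(x)\right]-\lim_{x\nearrow a}\left[\mathrm H\left(\tfrac{f'(x)}{f(x)}\right)g_2(x)\right]=\sum_{\ell=1}^\infty n_\ell\exp\left(\tfrac1\ell\right)=\mathcal F(n_1,n_2,\ldots),
$$
and analogously that the interior expression equals $\mathcal F(\mathring n_1,\mathring n_2,\ldots)$. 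In particular each argument of $\mathcal F^{-1}$ appearing in the corollary lies in $\mathrm{im}\,\mathcal F$. Applying $\mathcal F^{-1}$ — well-defined there by injectivity — to both sides then yields the two asserted formulas. There is no genuine obstacle: all the analytic work has already been done in Theorem~\ref{mainanalytic} and the injectivity Lemma, and the only point deserving a sentence of care is the verification that the multiplicity-count sequences lie in $\mathcal N$, which rests on the finiteness of the zero set of a nonzero analytic function on a compact interval.
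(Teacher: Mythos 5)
Your proposal is correct and coincides with the paper's own (implicit) argument: the corollary is stated without a separate proof precisely because it is the direct composition of the third and fourth identities of Theorem~\ref{mainanalytic} with the injectivity of $\mathcal F$ from the preceding lemma, which is exactly the assembly you perform. Your additional verification that $(n_1,\ldots)$ and $(\mathring n_1,\ldots)$ lie in $\mathcal N$ --- resting on the finiteness and finite order of the zeros of a non-identically-vanishing analytic function on $[a,b]$ --- is a small point of care the paper leaves unspoken, and it does no harm.
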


\begin{example}
Let $f(x)=\cos(2 x) + x^2 \sin(2 x) -\frac{1}{2} \sqrt{\mathrm e^{x}}+\frac{x-2}{4}$. Using theorem \ref{main} and \ref{mainanalytic} on $[0,2\pi]$ we obtain
$$
\sum_{\ell = 1}^\infty n_\ell = 3,\quad
\sum_{\ell = 1}^\infty \mathring n_\ell  = 2,\quad
\sum_{\ell = 1}^\infty n_\ell\ell  = 4,\quad
\sum_{\ell = 1}^\infty \mathring n_\ell\ell  = 2.
$$
and we conclude that $f$ has two zeros in $(0,2\pi)$ and a double zero on the boundary of $[0,2\pi]$.
\end{example}
\begin{example}
Let $f(x) = x^7-2x^6+x^5-x^3+2x^2-x$ have $n_\ell$ zeros of multiplicity $\ell$ on $\R$. By Theorem \ref{main} and \ref{mainanalytic} on $\R$ (observe that the boundary terms of the integrals cancel out in this case) we find that
$$\sum_{\ell=1}^\infty n_\ell  = 3 \text{ and }\sum_{\ell=1}^\infty  n_\ell\ell  = 5.$$
Hence $(n_1,\ldots)$ either equals $(1,2,0,\ldots)$ or $(2,0,1,\ldots)$. In particular $n_\ell = 0$, for $\ell\geqslant 4$. Using again Theorem \ref{mainanalytic} we get
$$
\sum_{i=1}^3 n_i \exp\left(\tfrac{1}{i}\right)\approx 6.8322.
$$
Since $1\cdot \mathrm e + 2 \cdot\sqrt{\mathrm e}\approx 6.0157$ and $2\cdot \mathrm e + 1\cdot \sqrt[3]{\mathrm e} \approx 6.8322$ we conclude that $f$ has two simple zeros and one of multiplicity 3.
\end{example}
\section{Numerical Aspects}
The number of zeros of a function $f$ in a given interval $[a,b]$ is of course an integer.
Therefore is suffices to compute the integral in Theorem~\ref{main} with
an error $\varepsilon<\frac12$. In particular, for the trapezoidal rule
$$T_N(\mathrm I):=\frac{b-a}N\Bigl(\frac{\mathrm I(a)+\mathrm I(b)}2+\sum_{k=1}^{N-1}\mathrm I\bigl(a+k\tfrac{b-a}N\bigr)\Bigr)$$
with $N+1$ equidistant grid points, the error $\varepsilon(N)$ is estimated by
$$
\varepsilon(N)=\Bigl|\int_a^b \mathrm I(x)\,\mathrm dx -T_N(\mathrm I)\Bigr|\leqslant\frac{(b-a)^3}{12N^2}\Vert \mathrm I''\Vert_{L^\infty}
$$
(see, e.g.,~\cite{schwarz} or~\cite{gautschi}).
Thus we have
\begin{theorem}\label{thm-numerics}
Let $f$ satisfy the assumptions in Theorem~\ref{main}. If 
$$
N>\sqrt{\frac{(b-a)^3}{6}\Vert \mathrm I''\Vert_{L^\infty}}\,,
$$
then one can replace the integral in Theorem~\ref{main} by the finite sum $T_N(\mathrm I)$
and round the result to the closest integer to get the values $n(f)$ and $\mathring n(f)$,
respectively.
\end{theorem}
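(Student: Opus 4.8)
The plan is to combine the integral representation of Theorem~\ref{main} with the elementary observation that an integer is recovered by rounding any real number lying within distance less than $\tfrac12$ of it. Write $n(f)=\int_a^b \mathrm I(x)\,\mathrm dx + B$, where $B$ collects the two boundary limits appearing in Theorem~\ref{main}; analogously $\mathring n(f)=\int_a^b \mathrm I(x)\,\mathrm dx + \mathring B$ with the complementary boundary limits. The boundary contribution is a single exact number computed from the unilateral limits of $\mathrm H(f'/f)$ at the endpoints, so the only quantity approximated numerically is the integral $\int_a^b \mathrm I\,\mathrm dx$, which we replace by $T_N(\mathrm I)$.

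First I would record that, since $n(f)\in\Z$, the rounded value of $T_N(\mathrm I)+B$ equals $n(f)$ as soon as
$$
\bigl|\,n(f)-(T_N(\mathrm I)+B)\,\bigr|=\Bigl|\int_a^b \mathrm I(x)\,\mathrm dx - T_N(\mathrm I)\Bigr|=\varepsilon(N)<\tfrac12 .
$$
Next I would invoke the classical trapezoidal error estimate quoted immediately before the theorem, namely $\varepsilon(N)\leqslant \frac{(b-a)^3}{12N^2}\|\mathrm I''\|_{L^\infty}$. Solving the inequality $\frac{(b-a)^3}{12N^2}\|\mathrm I''\|_{L^\infty}<\tfrac12$ for $N$ yields precisely $N>\sqrt{\frac{(b-a)^3}{6}\|\mathrm I''\|_{L^\infty}}$, which is the hypothesis. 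Hence under this hypothesis $\varepsilon(N)<\tfrac12$, and rounding $T_N(\mathrm I)+B$ returns $n(f)$; the identical argument with $\mathring B$ in place of $B$ returns $\mathring n(f)$.

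There is essentially no deep obstacle here — the statement is a direct corollary of the error bound recorded just above it, the only genuine ingredient being the rounding argument. The one point deserving a word is regularity: the error estimate with the constant $\tfrac1{12}$ presupposes that $\mathrm I\in C^2([a,b])$ with $\|\mathrm I''\|_{L^\infty}<\infty$, which is stronger than mere membership of $f$ in $\mathcal A^0([a,b])$. If $\|\mathrm I''\|_{L^\infty}=\infty$, no admissible $N$ exists and the statement is vacuous; when it is finite, the grid values $\mathrm I\bigl(a+k\tfrac{b-a}{N}\bigr)$ are all well defined (continuity of $\mathrm I$ being guaranteed, for instance, under the hypotheses of Proposition~\ref{prop-cont}) and the argument goes through verbatim.
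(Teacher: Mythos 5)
Your proposal is correct and follows essentially the same route as the paper, which derives the theorem directly from the preceding discussion: since $n(f)$ and $\mathring n(f)$ are integers, an approximation error $\varepsilon(N)<\tfrac12$ suffices, and the quoted trapezoidal bound $\varepsilon(N)\leqslant\frac{(b-a)^3}{12N^2}\Vert\mathrm I''\Vert_{L^\infty}$ yields exactly the stated condition on $N$. Your additional remark on the implicit regularity requirement $\Vert\mathrm I''\Vert_{L^\infty}<\infty$ (without which the hypothesis is vacuous) is a sensible clarification but does not alter the argument.
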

This theorem is quite remarkable: It allows to compute the number of zeros 
of a function $f$ on $[a,b]$ by evaluating finitely many values of $f,f'$ and  $f''$.

\begin{example}\label{numericalex}
Let $f:\mathbb R\to\mathbb R, ~x\mapsto J_0(x)$, be the zeroth Bessel function of the first kind. 
If $h$ is the Cauchy density, one can verify that $\|\mathrm I''\|_{L^\infty}<\frac{1}{\pi}$. 
We want to compute the number of zeros of $J_0$ on $[0,2\pi]$ by Theorem \ref{thm-numerics}.
It suffices to employ the trapezoidal rule with only
$$N=\left\lceil \frac{2\pi}{\sqrt3}\right\rceil = 4$$
equidistant intervals. We find
$$\begin{aligned}
T_{4}(\mathrm I)= \frac{\pi}{2}\left(\frac{\mathrm I(0) + \mathrm I(2\pi)}{2}+\sum_{k=1}^{3}\mathrm I\big(k\frac{\pi}{2}\big) \right)\approx 1.76479
\end{aligned}$$
and thus
$$
T_{4}(\mathrm I) - \frac1\pi\arctan\left( \frac{J_1(2\pi)}{J_0(2\pi)}\right)  \approx 1.76479 + 0.24419= 2.00898$$
and hence, $J_0$ has two zeros on $[0,2\pi]$.

If we compute the number of zeros of $J_0$ on $[0,100\pi]$, we have to choose
$$
N=\left\lceil \frac{500\sqrt{6}}{3}\cdot\pi\right\rceil = 1283.$$
(Actually, a finer analysis shows that a much smaller $N$ suffices). In this case, we get
$$\begin{aligned}
T_{1283}(\mathrm I)= \frac{100\pi}{1283}\left(\frac{\mathrm I(0) + \mathrm I(100\pi)}{2}+\sum_{k=1}^{1282}\mathrm I\big(k\frac{100\pi}{1283}\big) \right)\approx 99.75013
\end{aligned}$$
and
$$
T_{1283}(\mathrm I) - \frac1\pi\arctan\left( \frac{J_1(100\pi)}{J_0(100\pi)}\right) \approx 99.75013 + 0.24987 = 100,$$
hence we conclude that $J_0$ has $n=100$ zeros on $[0,100\pi]$, in accordance with the well known distribution of zeros of $J_0$. Surprisingly, the routine {\textsf{CountRoots}}
of {\textsf{Mathematica}}\texttrademark{} is giving up on this simple problem after giving it some thought.
\end{example}

From a practical point of view, it is desirable to keep $\Vert\mathrm I''\Vert_{L^\infty}$ (and hence $N$)
as small as possible. This can be achieved in several ways:
First of all, we have the freedom to choose the function $h$. Here is a small table
of possible choices of $h$ and the resulting function $\mathrm H$ in Theorem~\ref{main} (in
each case, the integrand $\mathrm I$ turns out rather nicely):
\begin{center}
\def\arraystretch{2}
$\begin{tabular}{|L|L|}
\hline

h(x) & \mathrm H(x)\\[0.5em]\hline

 \frac1{\pi (1 + x^2)}& \frac{\arctan x}\pi\\[0.5em]

\hline

\frac{1}{2 \left(x^2+1\right)^{3/2}}& \frac{x}{2 \sqrt{x^2+1}}\\[0.5em]

\hline

\frac{\exp \left(-x^2\right)}{\sqrt{\pi }}&  \frac12\operatorname{erf}(x)\\[0.5em]\hline

\frac1{4x^2}-\frac{1}{4x^2\sqrt{4 x^2+1}}&  \frac{\sqrt{4 x^2+1}-1}{4 x}\\[0.5em]
\hline

\operatorname{sech}(2 x)^2& \frac12\tanh(2x) \\[0.5em]
\hline

\frac{\mathrm e^x}{(1+\mathrm e^x)^2}& -\frac{1}{1+\mathrm e^x}\\[0.5em]
\hline

\operatorname{UnitBox}(x)& \begin{cases}
0 & \text{if $2 x<-1$} \\
 x+\frac{1}{2} &\text{if  $-\frac{1}{2}<x\leqslant \frac{1}{2}$} \\
 1 & \text{if $2 x>1$}
\end{cases}\\[0.5em]
\hline
 \operatorname{UnitTriangle}(x)& \begin{cases} 
0&\text{if $x\leqslant-1$}\\
\frac12(1+x)^2&\text{if $-1<x\leqslant 0$}\\
-\frac12(1-x)^2+1&\text{if $0<x\leqslant 1$}\\
1&\text{if $1<x$}
\end{cases}\\[0.5em]
\hline
\end{tabular}$
\end{center}
Moreover, with smooth functions $\gamma$ and $\kappa$ that satisfy $\operatorname{sign}\gamma(x)=\operatorname{sign}\kappa(x)=\operatorname{sign} x$ for all $x\neq 0$ and $\gamma(x)\sim C_1 |x|^\alpha\operatorname{sgn}x$ and $\kappa(x)\sim C_2 |x|^\beta\operatorname{sgn}x$ as $x\to 0$, where $0<\alpha\leqslant\beta$, one can modify the integrand $\mathrm I$ as follows and
the proof of Theorem~\ref{main} still goes through:
$$
\mathrm I(x)=h\left(\frac{\gamma (f'(x))}{\kappa(f(x))}\right)\left(\frac{\gamma (f'(x))f'(x)\kappa'(f(x))-\gamma '(f'(x))f''(x)\kappa(f(x))}{\kappa(f(x))^2}\right).
$$
In this case the boundary terms in $a$ and $b$ have to be taken with the function
$$
\mathrm H\left(\frac{\gamma (f'(x))}{\kappa(f(x))}\right).
$$
\section*{Acknowledgement}
We would like to thank the referees for their valuable remarks 
which greatly helped to improve this article.

\end{document}